\documentclass[11pt, reqno]{amsart}
\usepackage{indentfirst, amssymb, amsmath, amsthm, mathrsfs, setspace, indentfirst, enumerate,  mathrsfs, amsmath, amsthm, graphicx, MnSymbol}
\usepackage[bookmarksnumbered, colorlinks, plainpages]{hyperref}
\usepackage{mathrsfs}
\usepackage{tikz}
\usepackage{float}
\usepackage{tikz}
\usetikzlibrary{positioning,arrows.meta,shapes.geometric,calc}
\usepackage{pgfplots}
\usepackage{tabularx, colortbl, xcolor}
\usepackage{array, enumitem}
\pgfplotsset{compat=1.18}
\usetikzlibrary{3d}
\usepackage{tikz-3dplot}
\usepackage{float}
\usepackage{placeins}
\usepackage[most]{tcolorbox}
 \usepackage{xcolor}
\tdplotsetmaincoords{70}{120}
\newtheorem*{cor A}{Corollary A}
\newtheorem*{cor B}{Corollary B}

\newtheorem{theo}{Theorem}[section]
\newtheorem{lem}{Lemma}[section]

\newtheorem{defi}{Definition}[section]

\newcommand{\ol}{\overline}
\newcommand{\be}{\begin{equation}}
	\newcommand{\ee}{\end{equation}}
\newcommand{\beas}{\begin{eqnarray*}}
	\newcommand{\eeas}{\end{eqnarray*}}
\newcommand{\bea}{\begin{eqnarray}}
	\newcommand{\eea}{\end{eqnarray}}

\numberwithin{equation}{section}
\begin{document}
	\title[C\MakeLowercase{oefficients estimates and \MakeUppercase{H}ankel determinant......}]{\LARGE C\LARGE\MakeLowercase{oefficient estimates and \MakeUppercase{H}ankel Determinant for the Class of leaf shaped starlike function}}
	\date{}
	\author[S. P\MakeLowercase{anja}, A. B\MakeLowercase{anerjee and} S. S\MakeLowercase{majumder}]{S\MakeLowercase{hantanu} p\MakeLowercase{anja$^*$}, A\MakeLowercase{bhijit} B\MakeLowercase{anerjee and} S\MakeLowercase{ujoy} M\MakeLowercase{ajumder}}
	\address{Department of Mathematics, University of Kalyani, West Bengal 741235, India.}
	\email{panjasantu07@gmail.com, shantanumath26@klyuniv.ac.in}
	
	\address{ Department of Mathematics, University of Kalyani, West Bengal 741235, India.}
	\email{abanerjee\_kal@yahoo.co.in, abanerjeekal@gmail.com}

	\address{Department of Mathematics, Raiganj University, Raiganj, West Bengal-733134, India.}
	\email{sm05math@gmail.com, sjm@raiganjuniversity.ac.in}
	
	\renewcommand{\thefootnote}{}
	\footnote{2020 \emph{Mathematics Subject Classification}: 30C45, 30C50, 30C55}
	\footnote{\emph{Key words and phrases}:Univalent functions, starlike functions, logarithmic coefficients, coefficient difference, Hankel determinants.}
	\footnote{*\emph{Corresponding Author}: Shantanu Panja.}
	\renewcommand{\thefootnote}{\arabic{footnote}}
	\setcounter{footnote}{0}

		\begin{abstract}
			In this paper, we study a Ma--Minda type subclass of starlike functions associated with the normalized arcsine function $\varphi(z)=1+\frac{2}{\pi}\arcsin z.$
			Using the Carath\'eodory function approach together with sharp coefficient estimates for Schwarz functions, we obtain explicit sharp bounds for the initial Taylor coefficients and logarithmic coefficients of functions in this class. We also derive a sharp estimate for the second Hankel determinant \(H_{2,2}(f)\), as well as for the Hankel determinants \(H_{2,1}(F_f/2)\) and \(H_{2,1}(F_{f^{-1}}/2)\) associated with the logarithmic coefficients of a function and its inverse. In addition, we establish a sharp bound for a difference involving logarithmic coefficients. The extremal functions for all the main results are identified, confirming the sharpness of the estimates.
		\end{abstract}

	\thanks{Typeset by \AmS -\LaTeX}
	\maketitle
	
	\section{{\bf Introduction}}
		Coefficient problems have remained one of the central themes of geometric function theory since the pioneering works of Bieberbach, Carath\'eodory and
	Loewner. Sharp estimates for the Taylor coefficients of analytic functions not	only reveal the geometric behaviour of conformal mappings but also play an
	important role in the study of growth, distortion, covering theorems, Hankel determinants and logarithmic coefficients. Consequently, the search for sharp
	coefficient inequalities has continued to motivate the introduction of new	subclasses of analytic and univalent functions.
	
	Among the various approaches developed over the past decades, differential subordination has proved to be one of the most powerful tools for defining and
	studying geometrically meaningful subclasses. By prescribing suitable first- or higher-order differential constraints, one obtains function classes
	whose analytic structure naturally leads to coefficient estimates, extremal	problems and geometric inclusion relations.
		
Let $\mathcal{A}$ denote the family of analytic functions in the open	unit disk	$	\Omega=\{z\in\mathbb C:|z|<1\},$
normalized by
\begin{equation}\label{e-1.1}
	f(z)=z+\sum_{n=2}^{\infty}a_n z^n.
\end{equation}
The subclass of univalent functions in $\mathcal A$ is denoted by	$\mathcal S$.
A function $f\in\mathcal S$ is said to be starlike if
$f(\Omega)$ is starlike with respect to the origin and convex if $f(\Omega)$ is a convex domain.
The class of starlike functions is characterized analytically by	$$	S^{*}=	\left\{f\in\mathcal S: \Re\!\left(\frac{zf'(z)}{f(z)}\right)>0,\ z\in\Omega
\right\}.$$

We consider $\mathcal{B}_0$ as the class of functions $\omega$ which are analytic in $\Omega$ and satisfies $\omega(0)=0$ and $\mid \omega(z)\mid\le 1$. Every function $\omega\in \mathcal{B}_0$ have a Taylor series expansion $\omega(z)=\sum_{n=1}^{\infty}b_nz^n$ for $z\in\Omega$. Also $\mathcal{B}_0$ is called class of Schwarz functions.
	
	\smallskip
	{\bf\underline{Concept of Subordination:}} Let $f$ and $g$ be two analytic function $\Omega$. The function $f$ is subordinate to $g$, if there exists a Schwarz function $\omega(z)$ such that $f(z)=g(\omega)(z)$ for all $z\in \Omega$ and this is denoted \cite{Lowner_annalen_1923} by $f\prec g$. The concept of subordination used by Ma-Minda \cite{Ma+Minda_1992} in 1992, introduced a broader subclass of $S^*$ and defined by
	\beas &S^*(\varphi)=\bigg\{ f\in\mathcal{A}: \frac{zf^{\prime}(z)}{f(z)}\prec \varphi, \ \ \forall z\in \Omega\bigg\},\eeas 
	where $\varphi$ is an univalent function and satisfying the following conditions:
		\par{(i)} $\Re(\varphi(z))>0$ , for all $z\in \Omega$.
		\par(ii) The range set $\varphi(\Omega)$ being starlike with respect to $\varphi(0)=1$, $\varphi^{\prime}(0)>0$.
		\par(iii) $\varphi(\Omega)$ is symmetric about the real axis.

	In recent years, the concept of subordination has become an important tool in geometric function theory. It has been widely used to introduce and analyze various sub--classes of analytic and univalent functions. In particular Janowski \cite{Janowski_Anpolon_1970} introduced by $\varphi(z)=\frac{1+Az}{1+Bz}$, where $-1\le B< A\le 1$, the classes $S^*(\varphi)$ and $\mathcal{C}(\varphi)$ reduce to the classes $S^{*}[A, B]$ and $\mathcal{C}[A, B]$ respectively. A number of interesting subclasses of starlike and convex functions have been investigated by several authors \cite{Mwndiratta et al_BMMS_2015, Stainkiewicz, Raina_HJMS_2015, Ronning_AUMC_1991, Kumar_AMP_2021, Kumar_Yadav_IJS_2026, cho et al, Bano Raza, Kumar et. al., Alotabi et. al.}. 
	
	% =========================
	% INSERT TABLE HERE
	% =========================
	\begin{table}[H]
		\centering
		\renewcommand{\arraystretch}{1.35}
		\setlength{\tabcolsep}{6pt}
		\begin{tabular}{|c|p{3.2cm}|p{7.7cm}|}
			\hline
			\textbf{Year} & \textbf{Author(s)} & \textbf{Development} \\
			\hline
			1923 & L\"owner \cite{Lowner_annalen_1923} &
			Introduced the concept of subordination in complex analysis. \\
			\hline
			1970 & Janowski \cite{Janowski_Anpolon_1970} &
			Introduced the Janowski classes $\mathcal{S}^{*}[A,B]$ and $\mathcal{C}[A,B]$ via the mapping $\displaystyle\frac{1+Az}{1+Bz}$. \\
			\hline
			1992 & Ma and Minda \cite{Ma+Minda_1992} &
			Unified numerous subclasses by introducing the generalized Ma--Minda subordination framework. \\
			\hline
			Recent years & Various authors \cite{Mwndiratta et al_BMMS_2015,Raina_HJMS_2015,Ronning_AUMC_1991,Kumar_AMP_2021,Kumar_Yadav_IJS_2026,cho et al,Bano Raza,Kumar et. al.,Alotabi et. al.} &
			Development of subclasses associated with various conformal mappings and special functions. \\
			\hline
			Present paper & Authors &
			Introduction of a new Ma--Minda subclass associated with $\displaystyle \varphi(z)=1+\frac{2}{\pi}\arcsin z$, together with coefficient estimates, logarithmic coefficients, and Hankel determinant inequalities. \\
			\hline
		\end{tabular}
		\vspace{.5cc}
		\caption{Evolution of subordination-based subclasses leading to the present work.}
		\label{tab:history}
	\end{table}
		
	Motivated by recent studies on Ma--Minda subclasses associated with special functions, we consider the function  
	$\varphi(z)=1+\frac{2}{\pi}\operatorname{arcsin}z$ for $z\in\Omega$.  Geometrically, $\varphi$ maps $\Omega$ onto a domain shown in the Figure {\ref{fig:im1}, which  is symmetric with respect to the real axis and starlike with respect to $\varphi(0)=1$. The principal branch of the inverse sine function is analytic in $\Omega$ and hence $\varphi$ is analytic and univalent in $\Omega$. Moreover, the image domain $\varphi(\Omega)$ is a vertical strip after normalization, namely
	$\varphi(\Omega)=\left\{\omega\in\mathbb{C}:0<\Re(\omega)<2\right\}.$

	\begin{defi} We introduce the classes
	$$\mathcal{S}_{\arcsin}^{*}=\left\{f\in S:\frac{zf'(z)}{f(z)}\prec 1+\frac{2}{\pi}\operatorname{arcsin}z, \ \forall z\in\Omega\right\},$$
	which represent the subclass of starlike and convex functions associated with the hyperbolic mapping.
	\end{defi}
	\begin{center}
	\begin{figure}[t]
		\centering
		\includegraphics[width=13cm,height=11cm]{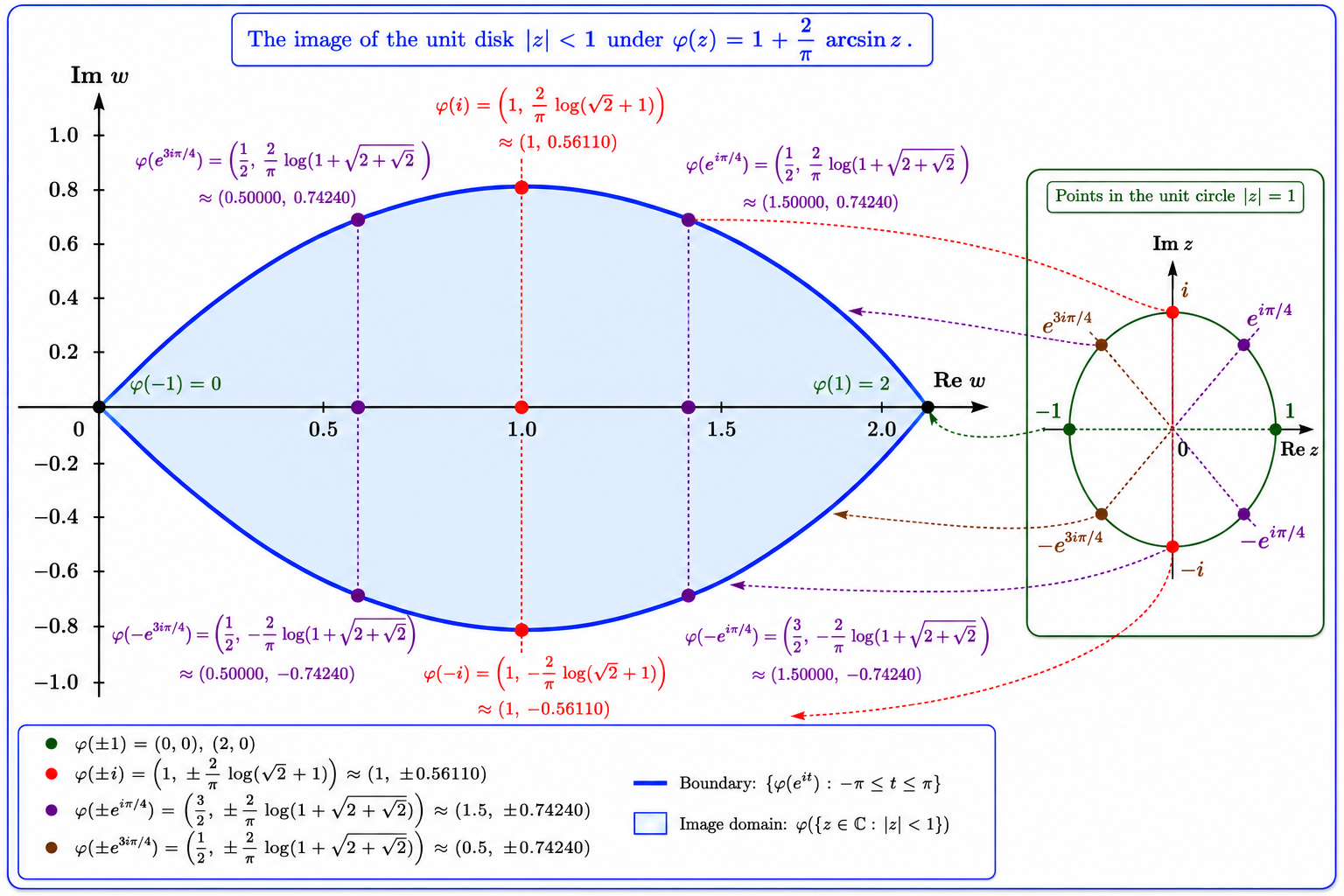}
		\caption{Image of the unit disc $\Omega$ under the normalized mapping $\varphi(z)=1+\dfrac{2}{\pi}\arcsin z$.}
		
		\label{fig:im1}
	\end{figure}
	\end{center}
	{\bf\underline{Background of logarithmic coefficient:}}
	Let a function $f\in S$ and the logarithmic coefficient $\gamma_n$ defined as follows: 
	\bea\label{e1.2} F_f(z)=\log\left({\frac{f(z)}{z}}\right)=2\sum_{n=1}^{\infty}\gamma_n(f)z^n;\ \ z\in\Omega\backslash\{0\} \ \text{with} \ \log 1=0. \eea
	The coefficient $\gamma_n(f)=\gamma_n$ corresponding to each $f\in S$ is known as logarithmic coefficient.
	Differentiating equation (\ref{e1.2}) and comparing coefficients yields relationships that define the coefficients $a_n$, then we get:
	\begin{equation}
		\left\{
		\begin{aligned} 
			\label{e1.3}&\gamma_1=\frac{1}{2}a_2,\\ &\gamma_2=\frac{1}{2}\left(a_3-\frac{1}{2}a_2^2\right),\\ &\gamma_3=\frac{1}{2}\left(a_4-a_2a_3+\frac{1}{3}a_2^3\right),\\
			&\gamma_4=\frac{1}{2}\left(a_5-a_2a_4+a_2^2a_3-\frac{1}{2}a_3^2-\frac{1}{4}a_2^4\right).
		\end{aligned}
		\right.
	\end{equation}
	
	\smallskip
	Logarithmic coefficients play a fundamental role of Milin's Conjecture (see \cite[p. 155]{PL_D_1983}, \cite{Milin_1971}) and have been extensively studied in geometric function theory due to their close relationship with coefficient problems and growth estimates. It is worth noting that sharp estimates are available only for the first few of these coefficients for functions in $S$ are 
	\beas \mid \gamma_1\mid \le  1 \ \text{and} \ \mid \gamma_2\mid \le \frac{1}{2}+\frac{1}{e},\eeas
	whereas for $n\ge 3$, the problem of determining bound for $\mid \gamma_n\mid$ is significantly more difficult and to the best of our knowledge no sharp general bounds for $\mid\gamma_n\mid$ have been established for functions in $S$.
	
	\medskip
	 Let $f \in \mathcal{S}$. The inverse logarithmic coefficients
	\(\Gamma_n\), introduced by Ponnusamy \emph{et al.}~\cite{PSW_RM}, are determined by the following series expansion of the inverse function:
	\[
	F_{f^{-1}}(z):=\log\!\left(\frac{f^{-1}(z)}{z}\right)
	=2\sum_{n=1}^{\infty}\Gamma_n z^n,
	\qquad |z|<\frac{1}{z}.
	\]
	Ponnusamy \emph{et al.}~\cite{PSW_RM} obtained sharp bounds for the inverse logarithmic coefficients of functions in the class \(\mathcal{S}\). They showed that
	\[
	|\Gamma_n(F)|\leq \frac{1}{2n}\binom{2n}{n},
	\]
	with equality holding only for the Koebe function and its rotations. Furthermore, coefficient comparison after differentiating the defining expansion yields explicit formulas for the first few inverse logarithmic coefficients.
	\begin{equation}
		\left\{
		\begin{aligned} 
			\label{eil.4}&\Gamma_1=-\frac{1}{2}a_2,\\ &\Gamma_2=-\frac{1}{2}\left(a_3-\frac{3}{2}a_2^2\right),\\ &\Gamma_3=-\frac{1}{2}\left(a_4-4a_2a_3+\frac{10}{3}a_2^3\right).
			%\\
			%&\Gamma_4=-\frac{1}{2}\left(a_5-5a_2a_4+15a_2^2a_3-\frac{5}{2}a_3^2-\frac{35}{4}a_2^4\right).
		\end{aligned}
		\right.
	\end{equation}
	The study of logarithmic coefficients has been an active area of research in geometric function theory. In particular, numerous sharp estimates have been established for logarithmic coefficients associated with various subclasses of \(\mathcal{S}\) (see \cite{Ali_Vasudevarao_PAMS_2018, Ali_Allu_BAMS, Zaprawa_BSMM_2021, Thomas_PAMS_2016, Ponnusamy_BSM_2021, Ponnusamy_JAMS_2020, Majumder_Indian}). This growing interest has naturally led to the investigation of their inverse counterparts.
	
	\- \smallskip
	{\bf\underline{Hankel determinant:}} For a function $f\in\mathcal{A}$ given by \eqref{e-1.1}, we recall that in 1967 Pommerenke \cite{Pom_1967} introduced the Hankel determinant provide higher-order coefficient invariant of univalent function.
	
	\smallskip
	Let $f\in \mathcal{A}$ be given by \eqref{e-1.1}. The $q$th Hankel determinant $H_{q,n}(f)$, for $q\geq 1$ and $n\geq 0$, is defined by
	\bea\label{a1.4}
	H_{q,n}(f)=
	\begin{vmatrix}
		a_{n} & a_{n+1} & \cdots & a_{n+q-1} \\
		a_{n+1} & a_{n+2} & \cdots & a_{n+q} \\
		\vdots & \vdots & \ddots & \vdots \\
		a_{n+q-1} & a_{n+q} & \cdots & a_{n+2q-2}
	\end{vmatrix}.
	\eea
	In particular for $q=2$ and $n=2$ putting in (\ref{a1.4}) we have $$H_{2,2}(f)=a_2 a_4-a_3^2.$$
	Several authors established sharp bounds for Hankel determinants in important subclasses (see \cite{Raza et. al, Zaprawa, ASS}).
	
	\smallskip 
	For $f\in\mathcal{A}$ be given by \eqref{e-1.1}. The $q$th Hankel determinant $H_{q, n}{(F_f/2)}$, for $q\geq 1$ and $n\geq 0$ is defined for the logarithmic coefficient as follows:
	\bea\label{b1.6}
	H_{q, n}(F_f/2)=
	\begin{vmatrix}
		\gamma_{n} & \gamma_{n+1} & \cdots & \gamma_{n+q-1} \\
		\gamma_{n+1} & \gamma_{n+2} & \cdots & \gamma_{n+q} \\
		\vdots & \vdots & \ddots & \vdots \\
		\gamma_{n+q-1} & \gamma_{n+q} & \cdots & \gamma_{n+2q-2}
	\end{vmatrix}.
	\eea
	In particular for $q=2$ and $n=1, 2$ putting in (\ref{b1.6}), we have \beas H_{2, 1}(F_{f}/2)=\gamma_1\gamma_3-\gamma_2^2.\eeas.
	
	\medskip
	 \smallskip 
	For $f\in\mathcal{A}$ be given by \eqref{e-1.1}. The $q$th Hankel determinant $H_{q, n}{(F_{f^{-1}}/2)}$, for $q\geq 1$ and $n\geq 0$ is defined for the logarithmic coefficient as follows:
	\bea\label{e1.4}
	H_{q, n}(F_{f^{-1}}/2)=
	\begin{vmatrix}
		\Gamma_{n} & \Gamma_{n+1} & \cdots & \Gamma_{n+q-1} \\
		\Gamma_{n+1} & \Gamma_{n+2} & \cdots & \Gamma_{n+q} \\
		\vdots & \vdots & \ddots & \vdots \\
		\Gamma_{n+q-1} & \Gamma_{n+q} & \cdots & \Gamma_{n+2q-2}
	\end{vmatrix}.
	\eea
	
	In particular for $q=2$ and $n=1, 2$ putting in (\ref{e1.4}), we have \beas H_{2, 1}(F_{f^{-1}}/2)=\gamma_1\gamma_3-\gamma_2^2.\eeas.
	The investigation of Hankel determinant whose entries consist of logarithmic coefficients of analytic functions was initiated by Kowalczyk--Lecko \cite{KL_BAMS_2022, KL_RACSAM_2023}. This framework has provided a powerful tool for studying coefficient-related problems and has significantly enriched the theory of univalent functions. Consequently, it has become an active area of research, with numerous contributions appearing in the literature; (see \cite{MK_BIMS_2023, Lecko et al, Majumder et al._JA_2026, S_Arif_Lithunian, Kumar_2024, BBS_2024, Nagpal_2025, Raza_Riaz}) for further developments.
	
\section{\bf{Lemmas}}
		Let $\mathcal{P}$ be the class of all analytic functions in the unit disk $\Omega$ such that $p(0)=1$ and $\Re(p(z))>0$ for all $z\in \Omega$. Every $p\in \mathcal{P}$ then has the series representation 
		\bea\label{e2.1} p(z)=1+\sum_{n=1}^{\infty} c_nz^n, \ \ z\in\Omega. \eea
		Function in $\mathcal{P}$ are referred to as Carath\'{e}odory functions. It will known that for $p\in\mathcal{P}$, the coefficients $\left(\text{see \cite{PL_D_1983}}\right)$ satisfy the sharp bound $\mid c_n\mid\leq 2$ for all $n\ge 1$. The Carath\'{e}odory class $\mathcal{P}$ and its coefficient bounds play a fundamental  role in deriving sharp estimates in geometric function theory.
	%	\begin{lem}\label{le.1}\cite{Grenander_1958}
	%		If $p\in\mathcal{P}$ be given by \emph{(\ref{e2.1})}, then
	%		\beas &&2c_2=c_1^2+x(4-c_1^2),\\ && 4c_3=c_1^3+2(4-c_1^2)c_1x-c_1(4-c_1^2)x^2+2(4-c_1^2)(1-\mid x\mid^2)z,\eeas
	%	\end{lem}
	\begin{lem}\label{l2.1}\cite{Ma+Minda_1992}
		If $p\in\mathcal{P}$ be given by \emph{(\ref{e2.1})}, then
		\[
		\mid c_2-{v} c_1^2\mid \le 
		\begin{cases}
			-4{v}+2 & \text{if } {v}<0, \\
			2 & \text{if } 0\le{v}\le 1,\\
			4{v}-2 & \text{if } {v}>1.
		\end{cases}
		\]
		Moreover, for ${v}<0$ or ${v}>1$ equality holds if and only if $h(z)=\frac{1+z}{1-z}$ one of its rotations. \\Also, for $0<{v}<1$ equality holds if and only if $h(z)=\frac{1+z^2}{1-z^2}$ or one of its rotations. 
	\end{lem}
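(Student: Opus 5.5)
The plan is to use the standard parametrization of the second coefficient $c_2$ in terms of $c_1$, and then reduce the bound for $|c_2-vc_1^2|$ to a maximization over $|c_1|\le 2$. The parametrization is the classical lemma of Libera--Z\l otkiewicz (Grenander--Szeg\H{o}). Since $\mathcal P$ is invariant under rotations $p(z)\mapsto p(e^{i\theta}z)$, which multiplies $c_2-vc_1^2$ by $e^{2i\theta}$, we may assume $c_1=c\in[0,2]$. The lemma then gives
\[
2c_2=c^2+x(4-c^2)\quad\text{for some } x\in\mathbb C,\ |x|\le 1 .
\]
Alternatively, one can pass to the Schwarz function $\omega=(p-1)/(p+1)=b_1z+b_2z^2+\cdots$. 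This gives $c_1=2b_1$ and $c_2=2b_2+2b_1^2$, together with the classical estimate $|b_2|\le 1-|b_1|^2$; both routes lead to the same inequality.

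Substituting the parametrization, we get
\[
c_2-vc^2=\tfrac12 c^2(1-2v)+\tfrac12 x(4-c^2),
\]
and hence
\[
|c_2-vc^2|\le \tfrac12 c^2|1-2v|+\tfrac12(4-c^2)=2+\tfrac12 c^2\bigl(|1-2v|-1\bigr).
\]
The right side is linear in $t=c^2\in[0,4]$, so its maximum is attained at an endpoint.
\begin{itemize}
\item If $0\le v\le 1$, then $|1-2v|\le 1$ and the maximum is at $t=0$, giving the bound $2$.
\item If $v<0$ or $v>1$, then $|1-2v|>1$ and the maximum is at $t=4$, giving $2|1-2v|$. This equals $2-4v$ for $v<0$ and $4v-2$ for $v>1$.
\end{itemize}
These are exactly the three cases of the statement.

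For the equality statements, we trace when every inequality in the chain is tight. When $v<0$ or $v>1$, equality forces $c=2$, so $|c_1|=2$. The standard fact that $|c_1|=2$ holds only for $h(z)=(1+z)/(1-z)$ and its rotations then identifies the extremal functions. When $0<v<1$, we have $|1-2v|<1$ strictly, so equality forces $c=0$ and $|x|=1$, hence $c_1=0$ and $|c_2|=2$. Those functions are $(1+z^2)/(1-z^2)$ up to rotation, since $|c_2|=2$ forces $p(z)=\tfrac12\bigl(h(\epsilon z)+h(-\epsilon z)\bigr)$, and with $c_1=0$ this is $(1+\epsilon^2z^2)/(1-\epsilon^2z^2)$. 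Note that at $v=0$ or $v=1$ both $t=0$ and $t=4$ give equality, which is why the statement restricts to $0<v<1$.

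I expect the uniqueness (the ``only if'') part to be the main obstacle, particularly the characterization of equality in $|c_2|\le 2$. I would handle it with the Herglotz representation $c_n=2\int e^{-in\theta}\,d\mu$: equality in $|c_2|=2$ forces $\mu$ to be supported on two antipodal points, and $c_1=0$ forces equal masses on them.
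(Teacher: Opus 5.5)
Your argument is correct. The paper gives no proof of Lemma~\ref{l2.1}; it simply cites Ma--Minda, and what you wrote is the standard argument behind that result. Your parametrization $2c_2=c_1^2+x(4-c_1^2)$ is exactly the paper's Lemma~\ref{l2.4} with $c_1=2\tau_1$ and $x=\tau_2$, and it is equivalent to your Schwarz-function variant $c_2-vc_1^2=2\bigl(b_2+(1-2v)b_1^2\bigr)$ with $|b_2|\le 1-|b_1|^2$.

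Two small things to tidy up:
\begin{itemize}
\item $|c_2|=2$ alone only gives $p(z)=\lambda h(\epsilon z)+(1-\lambda)h(-\epsilon z)$ with $\lambda\in[0,1]$. It is $c_1=0$ that forces $\lambda=\tfrac12$, as your Herglotz remark correctly says.
\item The ``if'' half of the equality statements needs the one-line check that $(1+\epsilon z)/(1-\epsilon z)$ and $(1+\epsilon z^2)/(1-\epsilon z^2)$ actually attain $2|1-2v|$ and $2$ respectively.
\end{itemize}
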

	\begin{lem}\label{l2.2}\cite{Ali_BMSS_2003}
		If $p\in\mathcal{P}$ be given by \emph{(\ref{e2.1})} with $0\le B\le 1$ and $2B(2B-1)\le D\le B$. Then 
		\beas \mid c_3-2Bc_1c_2+Dc_1^3\mid\le2.\eeas.
	\end{lem}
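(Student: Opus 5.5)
The plan is to pass from the Carath\'eodory function $p$ to its associated Schwarz function and then use a known sharp estimate for the functional $b_3+\mu b_1b_2+\nu b_1^3$ on $\mathcal{B}_0$. The result I will invoke is the Prokhorov--Szynal lemma, which is not among the results stated earlier in the paper, so it will be cited. Write $p=(1+\omega)/(1-\omega)$ with $\omega(z)=\sum_{n\ge1}b_nz^n\in\mathcal{B}_0$. Comparing coefficients gives
\[
c_1=2b_1,\qquad c_2=2(b_2+b_1^2),\qquad c_3=2(b_3+2b_1b_2+b_1^3).
\]
Substituting and collecting terms, I expect
\[
c_3-2Bc_1c_2+Dc_1^3=2\Big[b_3+(2-4B)\,b_1b_2+(1-4B+4D)\,b_1^3\Big].
\]
So it suffices to show $|b_3+\mu b_1b_2+\nu b_1^3|\le1$ with $\mu=2-4B$ and $\nu=1-4B+4D$.

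For this I will use the Prokhorov--Szynal lemma. It states that $|b_3+\mu b_1b_2+\nu b_1^3|\le 1$ whenever $(\mu,\nu)$ lies in $D_1\cup D_2$, where
\[
D_1=\{|\mu|\le \tfrac12,\ -1\le\nu\le1\},\qquad
D_2=\{\tfrac12\le|\mu|\le2,\ \tfrac4{27}(|\mu|+1)^3-(|\mu|+1)\le\nu\le1\}.
\]
Since $0\le B\le1$, we have $\mu\in[-2,2]$. The hypothesis $D\le B$ gives $\nu\le1$. The hypothesis $D\ge 2B(2B-1)$ gives $\nu\ge 8B^2-8B+1$. Using $B=(2-\mu)/4$, a short computation rewrites this lower bound as $\nu\ge \tfrac{\mu^2}{2}-1$.

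The remaining step, which I expect to be the only real obstacle, is to check that this parabola lies above the lower boundaries of $D_1\cup D_2$.
\begin{itemize}
\item For $|\mu|\le\frac12$ this is immediate, since $\frac{\mu^2}{2}-1\ge-1$.
\item For $s=|\mu|\in[\frac12,2]$, I must show
\[
g(s)=\tfrac{s^2}{2}-1-\tfrac4{27}(s+1)^3+(s+1)\ge0 .
\]
I will check the endpoints first: $g(2)=1-1=0$ and $g(\tfrac12)=-\tfrac78+1=\tfrac18>0$. Next, $g''(s)=1-\tfrac{8}{9}(s+1)$ is positive for $s<1/8$ and negative beyond. Hence $g$ is concave on $[\tfrac12,2]$, and a concave function that is nonnegative at both endpoints is nonnegative on the whole interval.
\end{itemize}

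Granting this, $(\mu,\nu)\in D_1\cup D_2$, so $|b_3+\mu b_1b_2+\nu b_1^3|\le1$, and multiplying by $2$ gives the claimed bound $|c_3-2Bc_1c_2+Dc_1^3|\le2$.

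If one prefers to avoid the Prokhorov--Szynal lemma, a fallback is the Libera--Zlotkiewicz representation of $c_2,c_3$ in terms of $c_1=c\in[0,2]$ (after a rotation) and parameters $|x|\le1$, $|\zeta|\le1$. The expression becomes
\[
c^3\Big(\tfrac14-B+D\Big)+(4-c^2)\,c\,x\Big(\tfrac12-B\Big)-\tfrac14\,c(4-c^2)x^2+\tfrac12(4-c^2)(1-|x|^2)\zeta .
\]
One would then maximize over $c$ and $|x|$. However, a crude triangle inequality on the $c^3$ term is not sufficient near $B=\tfrac12$, so that route needs more care. This is why I would use the Schwarz-function route above.
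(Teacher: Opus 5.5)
The paper gives no proof of this lemma; it only cites Ali. Your reduction $c_3-2Bc_1c_2+Dc_1^3=2\bigl[b_3+\mu b_1b_2+\nu b_1^3\bigr]$ with $\mu=2-4B$, $\nu=1-4B+4D$ is correct, and so is your statement of the Prokhorov--Szynal region. The error is in translating the lower hypothesis. From $D\ge 2B(2B-1)$ you get $4D\ge 16B^2-8B$, hence only
\[
\nu\ge 16B^2-12B+1=\mu^2-\mu-1,
\]
not $\nu\ge 8B^2-8B+1=\frac{\mu^2}{2}-1$. For $0<\mu<1$, i.e. $\frac{1}{4}<B<\frac{1}{2}$, the curve $\mu^2-\mu-1$ lies below $-1$, so $(\mu,\nu)$ can leave $D_1\cup D_2$.

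This gap cannot be closed, because the statement as printed is false. Take $B=\frac{3}{8}$ and $D=2B(2B-1)=-\frac{3}{16}$; these satisfy $0\le B\le 1$ and $2B(2B-1)\le D\le B$. Take $p(z)=\frac{1+z}{1-z}$, so $c_1=c_2=c_3=2$. Then
\[
c_3-2Bc_1c_2+Dc_1^3=2-3-\frac{3}{2}=-\frac{5}{2}.
\]
In your notation this is $\omega(z)=z$ with $(\mu,\nu)=(\frac{1}{2},-\frac{5}{4})$. More generally, the same $p$ gives $|32B^2-24B+2|>2$ for every $B\in(\frac{1}{4},\frac{1}{2})$ with $D=2B(2B-1)$.

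Your inequality $\nu\ge 8B^2-8B+1$ is exactly what follows from $D\ge B(2B-1)$. That is the hypothesis of Ali's lemma as it is usually quoted, so the factor $2$ in the printed hypothesis appears to be a typo. Under $0\le B\le 1$ and $B(2B-1)\le D\le B$, your argument is complete:
\begin{enumerate}
\item $\frac{\mu^2}{2}-1\ge -1$ settles the case $|\mu|\le\frac{1}{2}$.
\item Your concavity argument for $g$ on $[\frac{1}{2},2]$, with $g(\frac{1}{2})=\frac{1}{8}$ and $g(2)=0$, is correct.
\end{enumerate}
So you have a valid proof of the corrected lemma, self-contained apart from the cited Prokhorov--Szynal result. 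You should say explicitly that this corrected version is what you prove.

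Both places where the paper uses the lemma satisfy the corrected hypothesis. For $a_4$, $B=\frac{2\pi-3}{4\pi}$, $D\approx 0.1036$, and $B(2B-1)\approx -0.125$. For $\gamma_3$, $B=\frac{1}{2}$ and $D=\frac{7}{24}$. So the paper's bounds for $a_4$ and $\gamma_3$ are unaffected.
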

	\begin{lem}\label{l2.3}\cite{Ravichandran_Verma_CRMAA_2015}
		If $p\in\mathcal{P}$ be given by \emph{(\ref{e2.1})}. If $\beta$, $\gamma$, $\delta$ and $\xi$ satisfy $0<\beta<1$, $0<\xi<1$, and 
		\beas 8\xi(1-\xi)\big\{(\beta\gamma-2\delta)^2+(\beta(\xi+\beta)-\gamma)^2\big\}+\beta(1-\beta)(\gamma-2\beta\xi)^2\le 4\beta^2(1-\beta)^2\xi(1-
		\xi),\eeas
		then \beas\mid\delta c_1^4+\xi c_2^2+2\beta c_1c_3-\frac{3}{2}\gamma c_1^2c_2-c_4 \mid\le 2. \eeas
	\end{lem}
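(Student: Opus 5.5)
The plan is to pass from the Carath\'eodory function $p$ to its associated Schwarz function and reduce the claim to an inequality for a polynomial in the Schwarz coefficients. Write $p=(1+\omega)/(1-\omega)$ with $\omega(z)=\sum_{n\ge1}b_nz^n\in\mathcal B_0$. Comparing coefficients in $p(1-\omega)=1+\omega$ gives
\begin{align*}
c_1&=2b_1,\qquad c_2=2(b_2+b_1^2),\qquad c_3=2(b_3+2b_1b_2+b_1^3),\\
c_4&=2(b_4+2b_1b_3+b_2^2+3b_1^2b_2+b_1^4).
\end{align*}
Substituting these into $\delta c_1^4+\xi c_2^2+2\beta c_1c_3-\frac32\gamma c_1^2c_2-c_4$ gives an expression of the form
\[
-2\bigl(b_4+\mu_1 b_1b_3+\mu_2 b_2^2+\mu_3 b_1^2b_2+\mu_4 b_1^4\bigr).
\]
Here the $\mu_j$ are explicit real linear combinations of $\beta,\gamma,\delta,\xi$. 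For example $\mu_1=2-8\beta$ and $\mu_2=1-4\xi$, with $\mu_3,\mu_4$ obtained the same way. The claim is therefore equivalent to showing that the bracket has modulus at most $1$ for every Schwarz function.

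To bound the bracket, I would use the classical sharp inequalities for Schwarz coefficients, following Carlson and Prokhorov--Szynal. These are $|b_2|\le 1-|b_1|^2$, $|b_3|\le 1-|b_1|^2-\frac{|b_2|^2}{1+|b_1|}$, and $|b_4|\le 1-|b_1|^2-|b_2|^2$. A better route is the Schur algorithm. Write $b_2=(1-|b_1|^2)t$ and $b_3,b_4$ via Schur parameters $s_1=b_1$, $s_2=t$, $s_3$, $s_4$ in the closed disk. Then the bracket becomes a polynomial that is affine in $s_4$, with $s_4$ multiplied by $(1-|s_1|^2)(1-|s_2|^2)(1-|s_3|^2)$. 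Taking the modulus in $s_4$ and using the triangle inequality in $s_3$ reduces the problem to a real function $\Phi(x,y)$ with $x=|b_1|\in[0,1]$ and $y=|s_2|\in[0,1]$. The target is $\Phi\le1$. After rotating $b_1$ to be real and nonnegative, the phase of $s_2$ is handled by bounding the cross term by its modulus.

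The main obstacle is the final two-variable maximization. After subtracting, $1-\Phi(x,y)$ should factor as $(1-x^2)$ times a quadratic form in $y$ and in the quantity $x$-times-(coefficient). The hypothesis
\[
8\xi(1-\xi)\{(\beta\gamma-2\delta)^2+(\beta(\xi+\beta)-\gamma)^2\}+\beta(1-\beta)(\gamma-2\beta\xi)^2\le 4\beta^2(1-\beta)^2\xi(1-\xi)
\]
is exactly a discriminant condition ensuring this form is nonnegative. In that condition, $0<\beta<1$ and $0<\xi<1$ make the leading coefficients $\beta(1-\beta)$ and $\xi(1-\xi)$ positive. I would first complete the square in $y$ and then check that the residual, a quadratic in $x^2$, stays nonnegative on $[0,1]$. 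This is where the combinations $\beta\gamma-2\delta$, $\beta(\xi+\beta)-\gamma$ and $\gamma-2\beta\xi$ should emerge as the $\mu_j$-dependent coefficients. If a direct completion of squares is messy, I would instead apply the Cauchy--Schwarz inequality to the cross terms with weights $\xi(1-\xi)$ and $\beta(1-\beta)$ chosen so that the stated inequality appears verbatim. Equality holds for $p(z)=(1+z^4)/(1-z^4)$, which gives $c_4=2$ and all other $c_j=0$, so the bound $2$ cannot be improved.
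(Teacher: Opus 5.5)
The paper does not prove this lemma; it quotes it from Ravichandran--Verma, so your sketch has to stand on its own. As written it has a genuine gap.

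\textbf{The reduction and the predicted final step.} Your formulas for $c_n$ are correct, but substituting them gives
\[
\delta c_1^4+\xi c_2^2+2\beta c_1c_3-\tfrac32\gamma c_1^2c_2-c_4=-2\bigl(b_4+\mu_1b_1b_3+\mu_2b_2^2+\mu_3b_1^2b_2+\mu_4b_1^4\bigr)
\]
with $\mu_1=2-4\beta$, $\mu_2=1-2\xi$, $\mu_3=3+6\gamma-4\xi-8\beta$ and $\mu_4=1+6\gamma-8\delta-2\xi-4\beta$. So $\mu_1\neq 2-8\beta$ and $\mu_2\neq 1-4\xi$.

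You were right to abandon the separate Carlson bounds. At $b_2=0$ they give $1-x^2+|\mu_1|x(1-x^2)+|\mu_4|x^4$, which exceeds $1$ for small $x$ whenever $\beta\neq\frac12$.

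The decisive two-variable step is only conjectured, and the conjectured shape is wrong. For $x=|b_1|=1$ the Schwarz function is $b_1z$, so $\Phi(1,y)=|\mu_4|$, and $1-\Phi$ has no factor $1-x^2$ in general. Also, the bilinear quantities $\beta\gamma-2\delta$, $\beta(\xi+\beta)-\gamma$ and $\gamma-2\beta\xi$ cannot ``emerge as'' the $\mu_j$, which are linear in the parameters.

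\textbf{Why your reduction fails.} The reduction you describe provably loses too much. Put $s_1=x\ge0$, $s_2=ye^{i\theta}$ and $D=(1-x^2)(1-y^2)$. Then the bracket equals
\[
D(1-|s_3|^2)s_4+Dx(\mu_1-2s_2)s_3-D\bar{s}_2s_3^2+T_0,\qquad T_0=(1-x^2)\bigl[x^2s_2^3+\bigl(\mu_2(1-x^2)-\mu_1x^2\bigr)s_2^2+\mu_3x^2s_2\bigr]+\mu_4x^4 .
\]
The triangle inequality in $s_3$, together with moduli for the $s_2$-phases, gives a majorant at least
\[
|T_0|_{s_2=-y}+D\max_{0\le r\le1}\bigl[1+x(\mu_1+2y)r-(1-y)r^2\bigr].
\]

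Now take $\xi=\frac{7}{10}$, $\gamma=2\beta\xi$ and $\delta=\beta^2\xi$ with $\beta>0$ small. The hypothesis reduces to $2(\xi-\beta)^2\le(1-\beta)^2$, which holds. As $\beta\to0^+$ we get $\mu_1\to2$, $\mu_2=-\frac25$, $\mu_3\to\frac15$ and $\mu_4\to-\frac25$.

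Write $y=tx^2$. Your majorant becomes $1+x^4\bigl(\frac{16}{5}t-\frac35t^2-\frac35\bigr)+O(x^6)$, minus a term of order $\beta x^2$. At $t=\frac83$ this is $1+\frac{11}{3}x^4+O(x^6)>1$ once $\beta\ll x^2\ll1$; numerically it is about $1.0004$ at $x=0.1$.

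If you keep the phases instead, the true maximum over $s_3,s_4,\theta$ at the same $(x,y)$ is $1-\frac35x^4(t-1)^2+O(x^6)$ as $\beta\to0^+$. At $t=\frac83$ that is $1-\frac53x^4+O(x^6)<1$. The cancellation among $-D\bar{s}_2s_3^2$, $-2Dxs_2s_3$ and the $s_2$-terms of $T_0$ is exactly what the lemma needs.

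\textbf{What a correct proof needs.} Along your lines, you must optimise over $s_3$ and $\arg s_2$ jointly; Lemma~\ref{l2.5} only handles real $A,B,C$, so it does not do this for you. Alternatively, follow the original Ravichandran--Verma argument. Choosing Cauchy--Schwarz weights so that the hypothesis ``appears verbatim'' does not replace that computation.
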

	\begin{lem}\label{l2.4} \cite[Lemma 2.4]{CKL1} If $p\in\mathcal{P}$ is of the form \emph{(\ref{e2.1})}, then
		\bea
		\label{e2.2}c_1 =2\tau_1,\eea
		\bea\label{e2.3} c_2=2\tau_1^2 + 2(1 - \tau_1^2)\tau_2\eea
		and
		\bea
		\label{e2.4} c_3 = 2\tau_1^3+4(1-\tau_1^2)\tau_1\tau_2 - 2(1 - \tau_1^2)\tau_1\tau_2^2 + 2(1 - \tau_1^2)(1 - |\tau_2|^2)\tau_3
		\eea
		for some $\tau_1, \tau_2, \tau_3 \in\ol\Omega:= \{z \in \mathbb{C}: |z| \leq 1 \}$.
		
		\medskip
		For $\tau_1 \in \mathbb{T}:= \{z \in \mathbb{C}: |z| = 1 \}$, there is a unique function $p\in \mathcal{P}$ with $c_1$  as in \emph{(\ref{e2.2})}, namely
		\[p(z)=\frac{1+\tau_1 z}{1 - \tau_1 z}, \quad z \in \Omega.\]
		
		\medskip
		For $\tau_1 \in \Omega$ and $\tau_2 \in \mathbb{T}$, there is a unique function $p\in \mathcal{P}$ with $c_1$ and $c_2$ as in \emph{(\ref{e2.2})} and \emph{(\ref{e2.3})}, namely
		\[p(z) = \frac{1+(\ol \tau_1 \tau_2+\tau_1)z + \tau_2 z^2}{1+(\ol \tau_1 \tau_2 - \tau_1)z - \tau_2 z^2}, \quad z \in \Omega.\]

		\medskip
		For $\tau_1, \tau_2 \in \Omega$ and $\tau_3 \in \mathbb{T}$, there is a unique function $p\in\mathcal{P}$ with $c_1$, $c_2$ and $c_3$  as in \emph{(\ref{e2.2})--(\ref{e2.4})}, namely
		\[p(z)=\frac{1 + (\ol\tau_2 \tau_3 + \ol\tau_1 \tau_2 + \tau_1)z+(\ol\tau_1\tau_3+\tau_1\ol\tau_2\tau_3+\tau_2)z^2+\tau_3z^3}{1+(\ol\tau_2\tau_3+\ol\tau_1\tau_2-\tau_1)z+(\ol\tau_1\tau_3-\tau_1\ol\tau_2\tau_3-\tau_2)z^2-\tau_3z^3},\;\;z\in\Omega.\]
	\end{lem}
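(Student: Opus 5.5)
The plan is to prove the representation through the Schur algorithm applied to the Schwarz function associated with $p$. Given $p\in\mathcal{P}$, I set $\omega=(p-1)/(p+1)$. This is analytic in $\Omega$ with $\omega(0)=0$ and $|\omega|<1$, so $\omega\in\mathcal{B}_0$. Conversely, every $\omega\in\mathcal{B}_0$ gives $p=(1+\omega)/(1-\omega)\in\mathcal{P}$.

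\textbf{Step 1: coefficients of $p$ in terms of $b_n$.} Writing $\omega(z)=b_1z+b_2z^2+b_3z^3+\cdots$ and expanding $p=1+2\omega+2\omega^2+2\omega^3+\cdots$, I get
\[
c_1=2b_1,\qquad c_2=2(b_2+b_1^2),\qquad c_3=2(b_3+2b_1b_2+b_1^3).
\]

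\textbf{Step 2: Schur parametrization of $b_1,b_2,b_3$.} By the Schwarz lemma, $\omega(z)=z\omega_1(z)$ with $\omega_1$ in the closed unit ball of $H^\infty$. Put $\tau_1=\omega_1(0)$. If $|\tau_1|<1$, the Schur step
\[
\omega_1(z)=\frac{\tau_1+z\omega_2(z)}{1+\ol\tau_1 z\omega_2(z)}
\]
produces a new function $\omega_2$ in the closed unit ball; then put $\tau_2=\omega_2(0)$. Repeating once more gives $\omega_2=(\tau_2+z\omega_3)/(1+\ol\tau_2 z\omega_3)$ and $\tau_3=\omega_3(0)$. Expanding these Möbius compositions to order $z^2$ yields
\[
b_1=\tau_1,\qquad b_2=(1-|\tau_1|^2)\tau_2,
\]
\[
b_3=(1-|\tau_1|^2)\bigl[(1-|\tau_2|^2)\tau_3-\ol\tau_1\tau_2^2\bigr].
\]
Substituting into Step 1 gives \eqref{e2.2}--\eqref{e2.4}, with $|\tau_1|^2$ and $\ol\tau_1$ in place of $\tau_1^2$ and $\tau_1$. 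These agree with the displayed formulas when $\tau_1$ is real, which is the normalization used later. In general one can rotate $p(z)\mapsto p(e^{i\theta}z)$ to make $c_1\ge 0$; alternatively the statement should be read with $|\tau_1|^2$ and $\ol\tau_1$.

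\textbf{Degenerate cases.} If some $|\tau_k|=1$, the maximum principle forces $\omega_k\equiv\tau_k$, so the algorithm stops. Every triple in $\ol\Omega^3$ is attained, since reversing the recursion starting from the constant $\omega_3\equiv\tau_3$ gives admissible $\omega$.

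\textbf{Step 3: uniqueness statements.} For $|\tau_1|=1$, we have $\omega_1\equiv\tau_1$, so $\omega=\tau_1 z$ and $p=(1+\tau_1z)/(1-\tau_1z)$. For $|\tau_1|<1$ and $|\tau_2|=1$, we have $\omega_2\equiv\tau_2$, so
\[
\omega=z\,\frac{\tau_1+\tau_2z}{1+\ol\tau_1\tau_2z},
\]
and $p=(1+\omega)/(1-\omega)$ simplifies to the stated quotient. The third case is the same: set $\omega_3\equiv\tau_3$, compose the two Möbius maps, and clear denominators in $(1+\omega)/(1-\omega)$ to obtain the cubic-over-cubic expression.

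\textbf{Main obstacle.} I expect the main difficulty to be purely computational: the expansion for $b_3$, and the algebraic simplification of the explicit extremal $p$ in the third case. The conceptual input is only the Schur algorithm and the maximum principle.
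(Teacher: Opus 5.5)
Your proof is correct. The paper itself gives no argument for this lemma; it is quoted from \cite{CKL1}. The formulas for $c_2,c_3$ go back to Libera and Z{\l}otkiewicz, who derived them from the Carath\'eodory--Toeplitz determinant criterion. In that approach the uniqueness clauses need a separate treatment of the degenerate cases.

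Your Schur-algorithm route is self-contained and handles existence and uniqueness together. The expansions
$b_1=\tau_1$, $b_2=(1-|\tau_1|^2)\tau_2$ and $b_3=(1-|\tau_1|^2)\bigl[(1-|\tau_2|^2)\tau_3-\overline{\tau_1}\tau_2^2\bigr]$,
combined with $c_1=2b_1$, $c_2=2(b_2+b_1^2)$ and $c_3=2(b_3+2b_1b_2+b_1^3)$, give the representation directly. Each uniqueness clause then reduces to the maximum principle forcing $\omega_k\equiv\tau_k$ once $|\tau_k|=1$. I checked that composing the M\"obius steps with $\omega_2\equiv\tau_2$, resp.\ $\omega_3\equiv\tau_3$, and clearing denominators in $(1+\omega)/(1-\omega)$ reproduces exactly the two displayed quotients.

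Two details should be stated explicitly:
\begin{enumerate}
\item When $|\tau_k|=1$, the later parameters may be taken arbitrary in $\overline{\Omega}$, since their terms carry the factor $1-|\tau_k|^2=0$.
\item In the uniqueness clauses you use that $\tau_1,\tau_2,\tau_3$ are determined by $c_1,c_2,c_3$ whenever the preceding parameters lie in $\Omega$.
\end{enumerate}

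Your remark on normalization is right and worth keeping. As printed, the statement has no normalization such as $c_1\ge 0$ (equivalently $\tau_1\in[0,1]$, which is how the paper uses it), and it is then inconsistent for non-real $\tau_1$. Take $\tau_1\in\Omega\setminus\mathbb{R}$ and $\tau_2\in\mathbb{T}$. The printed $c_2$ gives $|c_2-c_1^2/2|=2|1-\tau_1^2|>2-|c_1|^2/2$, so no $p\in\mathcal{P}$ has these coefficients. Yet the displayed extremal function has $c_2=2\tau_1^2+2(1-|\tau_1|^2)\tau_2$.

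Rotating $p$ gives the real form only for the rotated function, which is all the later theorems need. So the precise fix is either to add the hypothesis $c_1\ge 0$, or to read the formulas with $|\tau_1|^2$ and $\overline{\tau_1}$ throughout, as you propose.
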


	\medskip
	Following well-known result is due to Choi et al. \cite{CKS1}.
	\begin{lem}\label{l2.5}\cite{CKS1} Let $A$, $B$, $C$  be real numbers and let
		\[\Psi(A, B, C):= \max\limits_{z\in \ol{\mathbb{D}}}\left\lbrace |A+Bz+Cz^2|+1-|z|^2\right\rbrace.\]
		
		\begin{enumerate} 
			\item[\emph{(i)}] If $AC\geq 0$, then
			\[\Psi(A, B, C) =
			\begin{cases}
				|A|+|B|+|C|, & \text{if}\;\;\; |B|\geq 2(1-|C|), \\
				1+|A|+\frac{B^2}{4(1-|C|)}, &\text{if}\;\;\; |B|<2(1-|C|).
			\end{cases}
			\]
			\item[\emph{(ii)}] If $AC<0$, then 
			\[\Psi(A,B,C)=
			\begin{cases}
				1-|A|+\frac{B^2}{4(1-|C|)}, &\text{if}\;\;\; -4AC(C^{-2}-1) \leq B^2\; \text{and}\; |B|<2(1-|C|), \\
				1+|A|+\frac{B^2}{4(1+|C|)}, &\text{if}\;\;\; B^2<\min\left\{4(1+|C|)^2, -4AC(C^{-2}-1) \right\}, \\
				R(A,B,C), &\text{otherwise},
			\end{cases}
			\]
			where
			\[R(A,B,C):=
			\begin{cases}
				|A|+|B|-|C|, & \text{if}\;\;\; |C|(|B|+4|A|) \leq |AB|, \\
				-|A|+|B|+|C|, & \text{if}\;\;\; |AB|\leq |C|(|B|-4|A|), \\
				(|C|+|A| )\sqrt{1-\frac{B^2}{4AC}}, &\text{otherwise}.
			\end{cases}
			\]
		\end{enumerate} 
	\end{lem}
	\begin{lem}\label{l2.6}\cite{Sim_Thomas_Symmetry_2020}
		Let $J$, $K$, and $L$ be numbers such that $J\geq 0$, $K\in\mathbb{C}$, and
		$L\in\mathbb{R}$. Let $p\in\mathcal{P}$ be of the form \eqref{e2.1} and define
		a function by
		\[
		\Phi(c_1,c_2)
		=
		\left|Kc_1^{2}+Lc_2\right|
		-
		J|c_1|.
		\]
		
		Then
		\[
		\Phi(c_1,c_2)
		\le
		\begin{cases}
			|4K+2L|-2J,
			& \text{if } |2K+L|\ge |L|+J,\\[2mm]
			2|L|,
			& \text{otherwise}.
		\end{cases}
		\]
		
		and
		
		\[
		-\Phi(c_1,c_2)
		\le
		\begin{cases}
			2J-M,
			& \text{when } J\ge M+2|L|,\\[3mm]
			2J\sqrt{\dfrac{2|L|}{M+2|L|}},
			& \text{when } J^{2}\le 2|L|(M+2|L|),\\[4mm]
			2|L|+\dfrac{J^{2}}{M+2|L|},
			& \text{otherwise},
		\end{cases}
		\]
		
		where
		\[
		M=|4K+2L|.
		\]
	\end{lem}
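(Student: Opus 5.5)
We will reduce the problem to a one-variable extremal problem using the parametrization of Lemma \ref{l2.4}, and then maximize an explicit quadratic on $[0,1]$.

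\textbf{Rotation and parametrization.} First we normalize $c_1$. If $p\in\mathcal P$, then $p_\theta(z)=p(e^{i\theta}z)$ also lies in $\mathcal P$. Its coefficients are $c_1e^{i\theta}$ and $c_2e^{2i\theta}$, so both $|Kc_1^2+Lc_2|$ and $|c_1|$ are unchanged. We may therefore assume $c_1=2t$ with $t\in[0,1]$, that is, $\tau_1=t$ real in Lemma \ref{l2.4}. By \eqref{e2.3},
\[
Kc_1^2+Lc_2=(4K+2L)t^2+2L(1-t^2)\tau_2,\qquad \tau_2\in\ol\Omega .
\]
By Lemma \ref{l2.4}, every $\tau_2\in\ol\Omega$ is realized by some $p\in\mathcal P$. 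As $\tau_2$ ranges over $\ol\Omega$, this expression fills the closed disk with centre $(4K+2L)t^2$ and radius $2|L|(1-t^2)$. Writing $M=|4K+2L|$, its modulus therefore has maximum and minimum
\[
Mt^2+2|L|(1-t^2)\quad\text{and}\quad \max\{0,\;Mt^2-2|L|(1-t^2)\}.
\]

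\textbf{Upper bound for $\Phi$.} We get $\Phi\le g(t):=(M-2|L|)t^2-2Jt+2|L|$ for $t\in[0,1]$.
If $M\ge 2|L|$, then $g$ is convex, so its maximum is $\max\{g(0),g(1)\}=\max\{2|L|,\,M-2J\}$. Moreover $M-2J\ge 2|L|$ is exactly the condition $|2K+L|\ge |L|+J$.
If $M<2|L|$, then $g$ is concave with vertex at $t=J/(M-2|L|)\le 0$. Hence $g$ is decreasing on $[0,1]$ and its maximum is $2|L|$. In this case $M<2|L|$ forces $|2K+L|<|L|+J$, so the ``otherwise'' value applies, as required.

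\textbf{Upper bound for $-\Phi$.} Here $-\Phi\le h(t):=2Jt-\max\{0,(M+2|L|)t^2-2|L|\}$. Set $t_0=\sqrt{2|L|/(M+2|L|)}$.
On $[0,t_0]$ we have $h(t)=2Jt$, which is increasing, so its maximum there is $2Jt_0$.
On $[t_0,1]$, $h$ is the concave quadratic $2Jt-(M+2|L|)t^2+2|L|$, with vertex $t^*=J/(M+2|L|)$. Three cases arise.
If $t^*\ge 1$, i.e. $J\ge M+2|L|$, the maximum is $h(1)=2J-M$.
If $t^*\le t_0$, i.e. $J^2\le 2|L|(M+2|L|)$, then $h$ is decreasing past $t_0$, and the maximum is $2Jt_0=2J\sqrt{2|L|/(M+2|L|)}$.
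Otherwise the maximum is $h(t^*)=2|L|+J^2/(M+2|L|)$.

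\textbf{Main obstacle.} The only delicate points are bookkeeping ones. First, one must check that the minimum modulus over the disk is exactly $\max\{0,\cdot\}$, since the disk can contain the origin. Second, one must check that the case boundaries coincide exactly with the stated conditions, including the degenerate case $L=0$. In that case $t_0=0$, and we interpret the formulas as limits.
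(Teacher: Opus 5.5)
Your proof is correct. There is no proof in the paper to compare it with: the paper imports this lemma from Sim and Thomas without argument. Your route is a self-contained derivation built on the paper's own Lemma~\ref{l2.4}, and it checks out.

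After rotating so that $c_1=2t$ with $t\in[0,1]$, the identity $Kc_1^2+Lc_2=(4K+2L)t^2+2L(1-t^2)\tau_2$ places the quantity in a disk. The centre has modulus $Mt^2$ and the radius is $2|L|(1-t^2)$. Both halves of the lemma then reduce to maximizing the explicit functions $g(t)=(M-2|L|)t^2-2Jt+2|L|$ and $h(t)=2Jt-\max\{0,(M+2|L|)t^2-2|L|\}$ on $[0,1]$.

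Your translations of the case conditions are all right: $|2K+L|\ge|L|+J\iff M-2J\ge 2|L|$, $t^*\ge1\iff J\ge M+2|L|$, and $t^*\le t_0\iff J^2\le 2|L|(M+2|L|)$. The first two branches of the bound for $-\Phi$ can overlap only when $M=0$ and $J=2|L|$, and there both give $4|L|$, so the statement is consistent.

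Three small refinements:
\begin{itemize}
\item For the inequalities you only need that each $p$ yields \emph{some} $\tau_2\in\ol\Omega$. The claim that the expression ``fills the closed disk'' is needed only for sharpness, which the lemma does not assert.
\item The genuinely degenerate case is not $L=0$. If $L=0$ but $M>0$, then $t_0=0$ and every formula makes sense as written. The real degeneracy is $M+2|L|=0$, i.e.\ $K=L=0$. There $t_0$ and the middle formula are undefined, but the first branch $J\ge 0=M+2|L|$ already gives $-\Phi=J|c_1|\le 2J$.
\item Your argument never uses $L\in\mathbb R$, so it shows the lemma holds for complex $L$ as well.
\end{itemize}
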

	\begin{tcolorbox}[
		enhanced,
		breakable,
		colback=yellow!6!white,
		colframe=red!65!black,
		boxrule=1pt,
		arc=2mm,
		left=1.5mm,
		right=1.5mm,
		top=1.2mm,
		bottom=1.2mm,
		title=\textbf{Motivation and Main Results},
		fonttitle=\bfseries\large,
		coltitle=black,
		attach boxed title to top left={xshift=2mm,yshift=-2mm},
		boxed title style={
			colback=red!12!white,
			colframe=red!65!black,
			sharp corners,
			boxrule=0.6pt
		}
		]
		\textit{Motivated by the need for sharp and structurally meaningful estimates in geometric function theory, we investigate the Ma--Minda-type class associated with the normalized $\arcsin$ function.}
		The study of the class $\mathcal{S}^{*}_{\arcsin}$ is natural because the mapping
		$\varphi(z)=1+\frac{2}{\pi}\arcsin z$
		combines geometric regularity with analytic flexibility. This makes it a suitable candidate for deriving sharp bounds for coefficients, logarithmic coefficients, and Hankel determinants. The results obtained here provide precise estimates for $a_n$, $\gamma_n$, $\Gamma_n$, and the associated Hankel determinants to the related class $\mathcal{S}^{*}_{\arcsin}$.
	\end{tcolorbox}
	\section{\bf{Sharp bounds of Coefficients for the class $\mathcal{S}^{*}_{\arcsin}$}}
	\begin{theo}\label{t3.1}
			Let $f(z)=z+a_2z^2+a_3z^3+\cdots\in\mathcal{S}^{*}_{\arcsin} $. Then
		$$
		|a_2|\le \frac{2}{\pi}, \ \mid a_3\mid \le \frac{1}{\pi}, \ \mid a_4\mid \le \frac{2}{3\pi}.
		$$
		All inequalities are sharp.
	\end{theo}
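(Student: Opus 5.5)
The plan is to use the standard Ma--Minda technique: translate the subordination into Schwarz-function coefficients, then convert these into Carath\'eodory coefficients so that Lemma \ref{l2.1} and Lemma \ref{l2.2} apply. Expanding
\[
\varphi(z)=1+\frac{2}{\pi}\arcsin z=1+B_1z+B_2z^2+B_3z^3+\cdots
\]
gives $B_1=2/\pi$, $B_2=0$ and $B_3=1/(3\pi)$. For $f\in\mathcal S^*_{\arcsin}$ we write $zf'(z)/f(z)=\varphi(\omega(z))$ with $\omega(z)=\sum b_nz^n\in\mathcal B_0$. We also put $p=(1+\omega)/(1-\omega)\in\mathcal P$, so that
\[
b_1=\tfrac{c_1}{2},\qquad b_2=\tfrac{c_2}{2}-\tfrac{c_1^2}{4},\qquad b_3=\tfrac{c_3}{2}-\tfrac{c_1c_2}{2}+\tfrac{c_1^3}{8}.
\]
On the left side,
\[
zf'/f=1+a_2z+(2a_3-a_2^2)z^2+(3a_4-3a_2a_3+a_2^3)z^3+\cdots.
\]
On the right side, since $B_2=0$,
\[
\varphi(\omega)=1+B_1b_1z+B_1b_2z^2+(B_1b_3+B_3b_1^3)z^3+\cdots.
\]
Comparing coefficients gives
\[
a_2=B_1b_1,\qquad a_3=\tfrac{B_1}{2}\bigl(b_2+B_1b_1^2\bigr),\qquad a_4=\tfrac{B_1}{3}\Bigl[b_3+\tfrac32B_1b_1b_2+\Bigl(\tfrac{B_1^2}{2}+\tfrac{B_3}{B_1}\Bigr)b_1^3\Bigr].
\]

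The bound for $a_2$ is immediate: $|a_2|=|c_1|/\pi\le 2/\pi$.

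For $a_3$, substituting the $c$'s gives
\[
b_2+B_1b_1^2=\tfrac12\bigl(c_2-v c_1^2\bigr),\qquad v=\tfrac{1-B_1}{2}=\tfrac12-\tfrac1\pi\in[0,1].
\]
By Lemma \ref{l2.1}, $|c_2-vc_1^2|\le2$, hence $|a_3|\le B_1/2=1/\pi$.

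For $a_4$, the bracket equals
\[
\tfrac12\bigl[c_3-2Bc_1c_2+Dc_1^3\bigr],\qquad 2B=1-\tfrac{q_1}{2},\qquad D=\tfrac{1-q_1+q_2}{4},
\]
where $q_1=\tfrac32B_1=3/\pi$ and $q_2=\tfrac{B_1^2}{2}+\tfrac{B_3}{B_1}=\tfrac{2}{\pi^2}+\tfrac16$. The only real work is verifying the hypotheses of Lemma \ref{l2.2}. Numerically $B\approx0.261\in[0,1]$, $2B(2B-1)\approx-0.250$ and $D\approx0.104$, so $2B(2B-1)\le D\le B$ holds. I will confirm these inequalities exactly in terms of $\pi$, e.g. using $3<\pi<4$ and $\pi^2>9$. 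The lemma then gives modulus at most $2$, so the bracket has modulus at most $1$ and $|a_4|\le B_1/3=2/(3\pi)$.

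The main obstacle is careful algebra: getting the third-order coefficient of $\varphi(\omega)$ and the conversion to $c_n$ exactly right, since a slip changes $B$ and $D$. That the $a_4$ bound equals $B_1/3$ suggests the extremal function comes from $\omega(z)=z^3$, and this gives a consistency check.

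For sharpness, take $f_n$ defined by $zf_n'/f_n=\varphi(z^{n-1})$ for $n=2,3,4$, i.e.
\[
f_n(z)=z\exp\!\Bigl(\tfrac{2}{\pi}\int_0^z\tfrac{\arcsin t^{n-1}}{t}\,dt\Bigr).
\]
These give $a_2=2/\pi$, $a_3=B_1/2=1/\pi$ and $a_4=B_1/3=2/(3\pi)$ respectively. For $n=3,4$ the lower coefficients vanish, so only the leading term survives.
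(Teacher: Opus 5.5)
Your proposal is correct and follows essentially the same route as the paper. You express $a_2,a_3,a_4$ through the Carath\'eodory coefficients, apply Lemma \ref{l2.1} with $v=\frac12-\frac1\pi$ and Lemma \ref{l2.2} with $B=\frac{2\pi-3}{4\pi}$, $D=\frac{7\pi^2-18\pi+12}{24\pi^2}$ (your $q_1,q_2$ reproduce these exactly), and take the extremals from $\omega(z)=z^{n-1}$. The only difference is that you organize the coefficient comparison through the general Ma--Minda formulas in $B_1,B_2,B_3$; the promised exact check of the lemma's hypotheses is immediate, since $D\le B$ reduces to $5\pi^2\ge 12$, and $2B(2B-1)<0<D$ holds because $0<2B<1$ and $7\pi^2-18\pi+12$ has negative discriminant.
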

	
	\begin{proof}
		Let $f\in \mathcal{S}^{*}_{\arcsin}$. Then, by the definition of the class, there exists a Schwarz function $\omega$ satisfying
		\bea\label{e3.1} \frac{zf^{\prime}(z)}{f(z)}=1+\frac{2}{\pi}\operatorname{arcsin}\omega(z). \eea 
		
		Suppose that $\omega(z)=\frac{p(z)-1}{p(z)+1}$, where $p\in\mathcal{P}$ is defined in (\ref{e2.1}), then we have 
		\bea\label{e3.2} 
		\omega(z)=\frac{c_1}{2}z+\left(\frac{c_2}{2}-\frac{c_1^2}{4}\right)z^2+\left(\frac{c_3}{2}-\frac{c_1c_2}{2}+\frac{c_1^3}{8}\right)z^3+\left(\frac{c_4}{2}-\frac{c_1c_3}{2}-\frac{c_2^2}{4}+\frac{3c_1^2c_2}{8}-\frac{c_1^4}{16}\right)z^4+\cdots.
		\eea
		Substituting (\ref{e-1.1}) and (\ref{e3.2}) into (\ref{e3.1}) and comparing the coefficients of like powers of $z$, we obtain 
		\bea 
		\label{e3.3}a_2&=&\frac{1}{\pi}c_1,\\ 
		\label{e3.4}a_3&=&\frac{1}{2\pi}\left(c_2-\left(\frac{\pi-2}{2\pi}\right)c_1^2\right),\\
		\label{e3.5}a_4&=&\frac{1}{3\pi}\left(c_3-\left(\frac{2\pi-3}{2\pi}\right)c_1c_2+\left(\frac{7\pi^2-18\pi+12}{24\pi^2}\right)c_1^3\right),\\
		\label{e3.6}a_5&=&\frac{1}{4\pi}\bigg(c_4-\left(\frac{\pi-1}{2\pi}\right)c_2^2-\left(\frac{3\pi-4}{3\pi}\right)c_1c_3+\left(\frac{21\pi^2-44\pi+24}{24\pi^2}\right)c_1^2c_2\nonumber\\&&-\left(\frac{27\pi^3-74\pi^2+72\pi-24}{144\pi^3}\right)c_1^4\bigg).\eea
		{\bf{A.}} It follows immediately from (\ref{e3.3})  that
		$ \mid a_2\mid\le \frac{2}{\pi}$.
		\par To show the sharpness of the estimate, we consider $p(z)=\frac{1+z}{1-z}$. If $f\in \mathcal{S}^{*}_{{\arcsin}}$, then its series expansion:
		\bea\label{f1} 
		f_1(z)=z+\frac{2}{\pi} z^2+\frac{2}{\pi^2}z^3+\left(\frac{1}{9\pi}+\frac{4}{3\pi^3}\right)z^4+\cdots.\eea
		{\bf{B.}} Applying Lemma~\ref{l2.1} to (\ref{e3.4}), we obtain
		\beas \mid a_3\mid = \frac{1}{2\pi}\left\mid c_2-\left(\frac{\pi-2}{2\pi}\right)c_1^2\right\mid \le \frac{1}{\pi}.\eeas
		\par For sharpness, we choose $p(z)=\frac{1+z^2}{1-z^2}$. If $f\in \mathcal{S}^{*}_{{\arcsin}}$, then its series expansion:
		\bea\label{f2} 
		f_2(z)=z+\frac{1}{\pi} z^3+\frac{1}{2\pi^2}z^5+\left(\frac{3+\pi^2}{18\pi^3}\right)z^7+\cdots.\eea
		{\bf{C.}} Here equation (\ref{e3.5}) can be rewritten as 
		\bea\label{c3.7} 
		\mid a_4\mid& =&\frac{1}{3\pi}\left\mid c_3-\left(\frac{2\pi-3}{2\pi}\right)c_1c_2+\left(\frac{7\pi^2-18\pi+12}{24\pi^2}\right)c_1^3\right\mid\nonumber\\&=&\frac{1}{3\pi}\mid c_3-2Bc_1c_2+Dc_1^3 \mid,
		\eea
		where $B=\left(\frac{2\pi-3}{4\pi}\right)\approx 0.26127$ and $D=\left(\frac{7\pi^2-18\pi+12}{24\pi^2}\right)\approx0.10359$. Since
		$2B(2B-1)\le D\le B,$
		Lemma~\ref{l2.2} yields 
		\beas \mid a_4\mid \le \frac{2}{3\pi}.\eeas
		\par For sharpness of the inequality, we take $p(z)=\frac{1+z^3}{1-z^3}$. If $f\in \mathcal{S}^{*}_{{\arcsin}}$, then its series expansion:
		\bea\label{f3} 
		f_3(z)=z+\frac{2}{3\pi} z^4+\frac{2}{9\pi^2}z^7+\left(\frac{4+3\pi^2}{81\pi^3}\right)z^{10}+\cdots.\eea 
		This completes the proof.
	\end{proof}
	\begin{theo}\label{t3.2}
			Let $f(z)=z+a_2z^2+a_3z^3+\cdots\in\mathcal{S}^{*}_{\arcsin} $. Then Hankel determinant
		$$
		|H_{2, 2}(f)|\le \frac{1}{\pi^2}.
		$$
	\end{theo}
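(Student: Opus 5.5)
The plan is to follow the same Carathéodory-function route as in the proof of Theorem~\ref{t3.1}. I will express $H_{2,2}(f)$ in terms of the coefficients of $p$, parametrize those coefficients by Lemma~\ref{l2.4}, and reduce the problem to an extremal problem of the type handled by Lemma~\ref{l2.5}. The expected extremal function is $f_2$ from (\ref{f2}), i.e.\ $p(z)=\frac{1+z^2}{1-z^2}$. For it $a_2=a_4=0$ and $a_3=\frac1\pi$, so $|H_{2,2}(f_2)|=\frac{1}{\pi^2}$, and the bound is sharp if it holds.

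\emph{Step 1.} From (\ref{e3.3})--(\ref{e3.5}) I compute
\[
H_{2,2}(f)=a_2a_4-a_3^2=\frac{1}{3\pi^2}\,c_1\Big(c_3-\tfrac{2\pi-3}{2\pi}c_1c_2+\tfrac{7\pi^2-18\pi+12}{24\pi^2}c_1^3\Big)-\frac{1}{4\pi^2}\Big(c_2-\tfrac{\pi-2}{2\pi}c_1^2\Big)^2 .
\]
The class $\mathcal S^*_{\arcsin}$ and the functional $|H_{2,2}|$ are invariant under rotations $e^{-i\theta}f(e^{i\theta}z)$. I may therefore assume $c_1=2t$ with $t=\tau_1\in[0,1]$.

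\emph{Step 2.} I substitute (\ref{e2.2})--(\ref{e2.4}) and collect terms in $\tau_2,\tau_3$. This gives
\[
H_{2,2}(f)=\frac{1}{\pi^2}\Big(P_0(t)+P_1(t)(1-t^2)\tau_2+P_2(t)(1-t^2)\tau_2^2+\tfrac{2}{3}t(1-t^2)(1-|\tau_2|^2)\tau_3\Big),
\]
where $P_0,P_1,P_2$ are explicit polynomials in $t$ with $\pi$-dependent coefficients. The $\tau_2^2$-coefficient contains the term $-(1-t^2)^2$ coming from $a_3^2$ together with a $t^2(1-t^2)$ contribution from $c_1c_3$.

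I then treat three cases.
\begin{itemize}
\item[(a)] For $t=1$, only $P_0(1)$ survives. Its absolute value is a numerical constant, which I check is below $1$.
\item[(b)] For $t=0$, one gets $H=-\frac{(1-0)^2\tau_2^2}{\pi^2}$, so $|H|\le\frac1{\pi^2}$, with equality at $|\tau_2|=1$; this is the function $f_2$.
\item[(c)] For $0<t<1$, I use $|\tau_3|\le1$ and factor out $\frac{2t(1-t^2)}{3\pi^2}$. This gives
\[
|H_{2,2}(f)|\le\frac{2t(1-t^2)}{3\pi^2}\Big(|A+B\tau_2+C\tau_2^2|+1-|\tau_2|^2\Big),
\qquad
A=\frac{3P_0}{2t(1-t^2)},\quad B=\frac{3P_1}{2t},\quad C=\frac{3P_2}{2t}.
\]
All of $A,B,C$ are real, so Lemma~\ref{l2.5} applies and bounds the bracket by $\Psi(A,B,C)$.
\end{itemize}

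\emph{Step 3, the main obstacle.} I have to determine which branch of Lemma~\ref{l2.5} is active on which subinterval of $(0,1)$, according to the sign of $AC$ and the size of $|B|$ relative to $|C|$. Then I must show that the resulting explicit function $g(t)=\frac{2t(1-t^2)}{3}\Psi(A,B,C)$ satisfies $g(t)\le1$ on $(0,1)$.

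Near $t=0$ we have $|C|\sim\frac{3}{2t}$, which is large. There I expect the branch $|A|+|B|+|C|$ (if $AC\ge0$) or $R(A,B,C)$ (if $AC<0$) to be the relevant one. In that regime $g(t)$ reduces to $1-\kappa t^2+O(t^3)$ for some constant $\kappa$, and I must verify $\kappa>0$ numerically using $\pi\approx3.1416$.

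If the case analysis of Lemma~\ref{l2.5} becomes unwieldy, my fallback is the cruder bound
\[
|A+B\tau_2+C\tau_2^2|+1-|\tau_2|^2\le |A|+|B|+|C|,
\]
valid whenever $|C|\ge1$. It reduces everything to a one-variable polynomial inequality
\[
|P_0|+(1-t^2)|P_1|+(1-t^2)|P_2|\le1 \quad\text{on }[0,1],
\]
which I would check by locating the sign changes of $P_0,P_1,P_2$ and estimating each piece. On any remaining interval where $|C|<1$, I would use the second formula of Lemma~\ref{l2.5}(i) instead.
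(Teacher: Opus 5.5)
Your plan follows the paper's route: express $H_{2,2}(f)$ through $c_1,c_2,c_3$, parametrize with Lemma~\ref{l2.4} using $\tau_1=t\in[0,1]$, and maximize over $\tau_2$ with Lemma~\ref{l2.5}. Once Step~2 is carried out it proves the theorem. At the decisive Lemma~\ref{l2.5} step your version is the correct one and the paper's is not.

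Doing the substitution gives $P_0=\frac{2(\pi^2-6)}{9\pi^2}t^4$, $P_1\equiv 0$ and $P_2=-\frac{3+t^2}{3}$. Hence $B=0$, $AC<0$ and $|C|>1$ on $(0,1)$.

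The paper uses the same $A_1,B_1,C_1$, asserts $-4A_1C_1(C_1^{-2}-1)>0$, and takes $\Psi=1+|A_1|$. In fact $-4A_1C_1>0$ and $C_1^{-2}-1<0$, so the product is negative and that branch is unavailable. The first branch of case (ii) also fails, since it needs $|B_1|<2(1-|C_1|)<0$. Only the $R$-branch remains, and $R(A_1,0,C_1)=|A_1|+|C_1|$. This is the $R$-branch you anticipated, and it coincides with your fallback $|A|+|B|+|C|$. It yields
\[
|H_{2,2}(f)|\le \frac{1}{\pi^2}\Big(1-\frac23 t^2-\frac{\pi^2+12}{9\pi^2}t^4\Big)\le\frac1{\pi^2}.
\]
So your inequality $|P_0|+(1-t^2)|P_1|+(1-t^2)|P_2|\le1$ holds, with $\kappa=\frac23$.

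It follows that the paper's interior estimate $\approx0.05303$ is not a valid bound. Take $\tau_2=i$ and let $\tau_1\to0^+$; such $p$ exist by Lemma~\ref{l2.4}, and $|H_{2,2}(f)|\to\frac1{\pi^2}$. The paper's final conclusion still survives.

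One slip to fix: the $\tau_3$-term is $\frac43\,t(1-t^2)(1-|\tau_2|^2)\tau_3$, not $\frac23\,t(1-t^2)(1-|\tau_2|^2)\tau_3$. The reason is that $a_2a_4$ contributes $\frac{2t}{\pi}\cdot\frac{2}{3\pi}(1-t^2)(1-|\tau_2|^2)\tau_3$.
\begin{itemize}
\item As written, your inequality in (c) undercounts this term, so it is not a valid upper bound when $|\tau_2|<1$.
\item You should factor out $\frac{4t(1-t^2)}{3\pi^2}$. This gives $C=-\frac{3+t^2}{4t}$, so $|C|\sim\frac{3}{4t}$ rather than $\frac{3}{2t}$.
\item The argument is otherwise unaffected. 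Since $|C|-1=\frac{(1-t)(3-t)}{4t}>0$ on $(0,1)$, the fallback applies on the whole interval and there is no subinterval with $|C|<1$.
\item The resulting polynomial inequality does not involve the $\tau_3$-coefficient at all, so it is unchanged.
\end{itemize}
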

	\begin{proof}
		 Since $f\in \mathcal{S}^{*}_{\arcsin}$, then we know the second order Hankel determinant 
		\bea\label{a3.11} \mid H_{2,2}(f)\mid&=&\mid a_2a_4-a_3^2\mid\nonumber\\&=& \frac{1}{3\pi^2}\left\mid c_1c_3-\frac{1}{4}c_1^2c_2-\frac{3}{4}c_2^2+\left(\frac{5\pi^2-12}{48\pi^2}\right)c_1^4 \right\mid
		\eea
		Substituting the Libera--Złotkiewicz representations
		(\ref{e2.2})--(\ref{e2.4}) into (\ref{a3.11}), we obtain
		\bea\label{a3.12}
		 \mid H_{2,2}(f)\mid&=&\frac{1}{3\pi^2}\left\mid \frac{2(\pi^2-6)}{3\pi^2}\tau_1^4-(1-\tau_1^2)(3+\tau_1^2)\tau_2^2+4(1-\tau_1^2)(1-\mid\tau_2\mid^2)\tau_1\tau_3\right\mid.
		\eea
		As $c_1\in [0, 2]$, then by Lemma~\ref{l2.4} we have $\tau_1\in[0, 1]$. Therefore from (\ref{a3.12}) it follows that 
		\beas \mid H_{2,2}(f)\mid=
		\begin{cases}
			\frac{1}{\pi^2}\mid\tau_2\mid^2\le \frac{1}{\pi^2}\approx 0.10132, & \tau_1=0, \\[3mm]
			\frac{2(\pi^2-6)}{9\pi^4}\approx 0.00883, & \tau_1=1.
		\end{cases} \eeas
		For $0<\tau_1<1$, applying the triangle inequality to
		(\ref{a3.12}), we obtain the following estimation, for $0<\tau_1<1$ and $|\tau_3|\le1$.
		\bea\label{a3.13} 
		\mid H_{2,2}(f)\mid&\leq&\frac{1}{3\pi^2}\left(\left\mid \frac{2(\pi^2-6)}{3\pi^2}\tau_1^4-(1-\tau_1^2)(3+\tau_1^2)\tau_2^2\right\mid+\left\mid4(1-\tau_1^2)(1-\mid\tau_2\mid^2)\tau_1\tau_3\right\mid\right)\nonumber\\&\le&\frac{4}{3\pi^2}\tau_1(1-\tau_1^2)\left(\left\mid \frac{(\pi^2-6)\tau_1^3}{6\pi^2(1-\tau_1^2)}-\left(\frac{3+\tau_1^2}{4\tau_1}\right)\tau_2^2\right\mid+(1-\mid\tau_2\mid^2)\right)\nonumber\\&\le& \frac{4}{3\pi^2}\mid\tau_1\left(1-\tau_1^2\right)\mid\Psi\left(A_1, B_1, C_1\right),
		\eea
		where $\Psi\left(A_1, B_1, C_1\right)=\mid A_1+B_1\tau_2+C_1\tau_2^2\mid+1-\mid\tau_2\mid^2$ with \beas A_1=\frac{(\pi^2-6)\tau_1^3}{6\pi^2(1-\tau_1^2)}, \  B_1=0 \ \text{and} \ C_1=-\frac{3+\tau_1^2}{4\tau_1}.\eeas
		\smallskip
		We now consider the following cases under Lemma \ref{l2.5}, according to the various choice of $A_1$, $B_1$ and $C_1$.
		\smallskip
		
		Suppose $\tau_1\in (0, 1)$. Clearly, $A_1C_1\le 0$, also we have
		\bea\label{s1} 
		\left(\frac{1}{C_1^2}-1\right)=-\frac{(9-\tau_1^2)(1-\tau_1^2)}{\left(3+\tau_1^2\right)^2}<0, \ \text{for} \ 0<\tau_1<1.
		\eea
		Hence $-4A_1C_1(C_1^{-2}-1)>0$, also $4(1+\mid C_1\mid)>0$ for all $0<\tau_1<1$. \\Consequently $B_1^2<\min\{-4A_1C_1(C_1^{-2}-1), 4(1+\mid C_1\mid)\}$.
		Hence, by Lemma {\ref{l2.5}} we get
		 
		\bea\label{s2} \Psi\left(A_1, B_1, C_1\right) =1+ \mid A_1\mid+\frac{B_1^2}{4(1+\mid C_1\mid)}.\eea
		Now from (\ref{a3.13}) and (\ref{s2}) we get
		\bea\label{a3.16} \mid H_{2,2}(f)\mid&\le&\frac{2}{9\pi^4}\left((\pi^2-6)\tau_1^4-6\pi^2\tau_1^3+6\pi^2\tau_1\right)=\frac{2}{9\pi^4}P_1(\tau_1),\eea
		where $P_1(\tau_1)=(\pi^2-6)\tau_1^4-6\pi^2\tau_1^3+6\pi^2\tau_1$.
	
	\begin{figure}[H]
		\centering
		\begin{tikzpicture}
			
			\begin{axis}[
				width=12cm,
				height=8cm,
				xmin=0,
				xmax=1.03,
				domain=0:1,
				samples=300,
				axis lines=left,
				xlabel={$\tau_1$},
				ylabel={$P_1(\tau_1)$},
				xlabel style={
					font=\large,
					yshift=-8pt
				},
				ylabel style={font=\large},
				tick label style={font=\large},
				enlargelimits=false,
				clip=false,
				ymax=26.5,
				ymin=0,
				grid=major,
				grid style={gray!55},
				]
				
				% Polynomial
				\addplot[
				blue,
				very thick
				]
				{(pi^2-6)*x^4-6*pi^2*x^3+6*pi^2*x};
				
				% Maximum point
				\addplot[
				only marks,
				mark=*,
				mark size=2.5pt,
				red
				]
				coordinates {(0.59287,23.2987)};
				
				% Dashed guides
				\draw[
				red,
				line width=1.2pt,
				dash pattern=on 6pt off 3pt
				]
				(axis cs:0.59287,0)
				--
				(axis cs:0.59287,23.2987);
				
				\draw[
				red,
				line width=1.2pt,
				dash pattern=on 6pt off 3pt
				]
				(axis cs:0,23.2987)
				--
				(axis cs:0.59287,23.2987);
				
				% Annotation
				\node[
				fill=white,
				inner sep=1.5pt,
				anchor=west,
				font=\small
				]
				at (axis cs:0.41,19.55)
				{$(\tau_0,P_1(\tau_0))$};
				
				\node[
				font=\small,
				anchor=north
				]
				at (axis cs:0.59287,-1.85)
				{$\tau_0\approx0.59287$};
			\draw[
			->,
			very thick,
			gray!80!black
			]
			(axis cs:0.47,20.8)
			--
			(axis cs:0.57287,22.8987);	
			\end{axis}
			
		\end{tikzpicture}
		
		\caption{The polynomial
			$P_1(\tau_1)=(\pi^2-6)\tau_1^4-6\pi^2\tau_1^3+6\pi^2\tau_1$ on $0\le\tau_1\le1$. The unique maximum occurs at $\tau_0\approx0.59287$.}
		\label{fig:P1}
	\end{figure}
		
	Differentiating, $P_1'(\tau_1)=4(\pi^2-6)\tau_1^3
	-18\pi^2\tau_1^2+6\pi^2.$
	The equation $P_1'(\tau_1)=0$ has a unique solution
	$\tau_0\approx0.59287$ in the interval $(0,1)$.
	Moreover, $P_1''(\tau_0)<0$,	which shows that $P_1(\tau_1)$ attains its maximum at
	$\tau_1=\tau_0$ (see Figure~\ref{fig:P1}).
	
	Now from (\ref{a3.16}) we obtain
	\beas
	|H_{2,2}(f)|
	\le
	\frac{2}{9\pi^4}P_1(0.59287)
	\approx0.05303.
	\eeas	
		  
		Next from (\ref{a3.16}) we obtain
		\beas \mid H_{2,2}(f)\mid&\le&\frac{2}{9\pi^4}P_1(0.59287)\approx 0.05303.\eeas
		Combining the estimates for the cases
		$\tau_1=0$, $\tau_1=1$, and $0<\tau_1<1$, we conclude that
		$$|H_{2,2}(f)|\le\frac1{\pi^2}.$$
		The estimate is sharp, since equality is attained by the function $f_2$ defined in (\ref{f2}). 
		
	\end{proof}

	\section{\bf{Sharp bounds of Coefficients and logarithmic coefficients for the class $\mathcal{S}^{*}_{\arcsin}$}}
       \begin{theo}
	 	Let $f(z)=z+a_2z^2+a_3z^3+\cdots\in\mathcal{S}^{*}_{\arcsin} $ and $\gamma_1$, $\gamma_2$, $\gamma_3$, $\gamma_4$ are given in \emph{(\ref{e1.3})}. Then
	 	$$
	 	|\gamma_n|\le \frac{1}{n\pi},
	 	\qquad n=1,2,3,4.
	 	$$
	 	The estimates are sharp.  \end{theo}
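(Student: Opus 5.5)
The plan is to work with the logarithmic derivative directly instead of going through the formulas (\ref{e1.3}). Differentiating (\ref{e1.2}) and using (\ref{e3.1}) gives
\[
zF_f'(z)=\frac{zf'(z)}{f(z)}-1=\frac{2}{\pi}\arcsin\omega(z),
\qquad\text{that is,}\qquad
\sum_{n=1}^{\infty}2n\gamma_n z^n=\frac{2}{\pi}\arcsin\omega(z).
\]
Hence $n\gamma_n=\frac{1}{\pi}[z^n]\arcsin\omega(z)$, and the theorem reduces to showing that $|[z^n]\arcsin\omega(z)|\le 1$ for $n=1,2,3,4$.

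Write $\omega(z)=\sum b_kz^k$ and use $\arcsin w=w+\frac{w^3}{6}+\cdots$. Up to order four this gives
\[
[z^1]=b_1,\quad [z^2]=b_2,\quad [z^3]=b_3+\tfrac16 b_1^3,\quad [z^4]=b_4+\tfrac12 b_1^2b_2 .
\]
We then substitute the Carath\'eodory representation (\ref{e3.2}), $b_1=c_1/2$, $b_2=c_2/2-c_1^2/4$, and so on. This yields
\[
2\pi\gamma_1=\frac{c_1}{2},\qquad 4\pi\gamma_2=c_2-\tfrac12c_1^2,
\qquad 6\pi\gamma_3=c_3-c_1c_2+\tfrac{7}{24}c_1^3,
\]
\[
8\pi\gamma_4=c_4-c_1c_3-\tfrac12c_2^2+\tfrac78c_1^2c_2-\tfrac{3}{16}c_1^4 .
\]

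The four bounds then follow from the lemmas of Section 2.
\begin{enumerate}
\item[(i)] For $\gamma_1$, the bound $|c_1|\le2$ gives $|\gamma_1|\le\frac1\pi$.
\item[(ii)] For $\gamma_2$, Lemma~\ref{l2.1} with $v=\frac12$ gives $|c_2-\frac12c_1^2|\le2$, hence $|\gamma_2|\le\frac1{2\pi}$.
\item[(iii)] For $\gamma_3$, we apply Lemma~\ref{l2.2} with $B=\frac12$ and $D=\frac7{24}$. Its hypothesis $2B(2B-1)=0\le D\le B$ holds, so $|\gamma_3|\le\frac{2}{6\pi}=\frac1{3\pi}$.
\item[(iv)] For $\gamma_4$, we rewrite $-8\pi\gamma_4$ in the form of Lemma~\ref{l2.3}, namely $\delta c_1^4+\xi c_2^2+2\beta c_1c_3-\frac32\gamma c_1^2c_2-c_4$, with $\delta=\frac3{16}$, $\xi=\frac12$, $\beta=\frac12$ and $\gamma=\frac7{12}$.
\end{enumerate}

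The main obstacle is the case $n=4$, specifically checking the hypothesis of Lemma~\ref{l2.3}. With the values above, $\beta\gamma-2\delta=-\frac1{12}$, $\beta(\xi+\beta)-\gamma=-\frac1{12}$ and $\gamma-2\beta\xi=\frac1{12}$. The left-hand side of the condition therefore equals
\[
2\cdot\tfrac{2}{144}+\tfrac14\cdot\tfrac1{144}=\tfrac{17}{576},
\]
which is smaller than the right-hand side $4\beta^2(1-\beta)^2\xi(1-\xi)=\frac1{16}$. Lemma~\ref{l2.3} then gives $|\gamma_4|\le\frac{2}{8\pi}=\frac1{4\pi}$. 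I would double-check the expansion of $b_4$ from (\ref{e3.2}) carefully, since any slip in the coefficients $\frac78$ and $\frac3{16}$ would change $\delta$ and $\gamma$.

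For sharpness, take $\omega(z)=z^n$, that is, $p(z)=\frac{1+z^n}{1-z^n}$ for $n=1,2,3,4$. Then $\arcsin\omega(z)=z^n+O(z^{3n})$, so $n\gamma_n=\frac1\pi$ exactly. For $n=1,2,3$ the corresponding extremal functions are $f_1$, $f_2$, $f_3$ from (\ref{f1})--(\ref{f3}). For $n=4$ the extremal function is the analogous $f_4$ defined by $zf_4'/f_4=1+\frac2\pi\arcsin z^4$.
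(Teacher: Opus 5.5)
Your proof is correct and is essentially the paper's. You reach exactly the same expressions $4\pi\gamma_2=c_2-\frac12c_1^2$, $6\pi\gamma_3=c_3-c_1c_2+\frac{7}{24}c_1^3$ and $8\pi\gamma_4=c_4-c_1c_3-\frac12c_2^2+\frac78c_1^2c_2-\frac3{16}c_1^4$, and you apply Lemmas~\ref{l2.1}, \ref{l2.2} and \ref{l2.3} with the same parameters; your check $\frac{17}{576}<\frac1{16}$ is the paper's $-\frac{19}{576}<0$. The only difference is bookkeeping: reading the coefficients off $zF_f'(z)=\frac2\pi\arcsin\omega(z)$ is shorter than passing through (\ref{e1.3}) and (\ref{e3.3})--(\ref{e3.6}), and it makes sharpness via $\omega(z)=z^n$ immediate.

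One slip to fix: since $\pi\gamma_1=b_1=c_1/2$, your first formula should read $2\pi\gamma_1=c_1$, not $c_1/2$. As written it would give only $|\gamma_1|\le\frac1{2\pi}$, which contradicts the sharp value you then claim.
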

	 \begin{proof} 
	 	Employing the same procedure as in the proof of Theorem~\ref{t3.1}, the coefficients
	 	$a_2,a_3,a_4$ and $a_5$ can be expressed in terms of the coefficients of the
	 	Carath\'eodory function $p(z)=1+c_1z+c_2z^2+\cdots$ as given in
	 	\eqref{e3.3}--\eqref{e3.6}.\\
	 	{\bf{A.}} From (\ref{e1.3}) and (\ref{e3.3}), we get 
	 	\beas \mid\gamma_1\mid =\frac{1}{2\pi}\mid c_1\mid\le\frac{1}{\pi}.\eeas
	 	Hence, $$|\gamma_1|\le\frac1{\pi},$$
	 	which is the desired estimate.
	 	Equality is attained by the extremal function $f_1$ defined in \eqref{f1}.\\
	 	{\bf{B.}} Using (\ref{e1.3}) and (\ref{e3.3})--(\ref{e3.4}) we obtain
	 	\beas \mid \gamma_2 \mid &=& \frac{1}{2}\mid a_3-\frac{1}{2}a_2^2\mid\\&=&\frac{1}{4\pi}\mid c_2-\frac{1}{2}c_1^2\mid.\eeas
	 	Therefore, by Lemma \ref{l2.1} we get $\mid \gamma_2 \mid\le \frac{1}{2\pi}$.\\
	 	To verify the sharpness of the above result, we employ the function $f_2$ introduced in \eqref{f2}.\\
	 	{\bf{C.}} In this case using (\ref{e3.3})--(\ref{e3.5}) in (\ref{e1.3}), then we get
	 	\beas 
	 	\mid\gamma_3\mid&=&\frac{1}{2}\mid a_4-a_2a_3+\frac{1}{3}a_2^3\mid\\ &=&
	 	\frac{1}{6\pi}\mid c_3-c_1c_2+\frac{7}{24}c_1^3\mid\\&=&\frac{1}{6\pi}\mid c_3-2Bc_1c_2+Dc_1^3 \mid,
	 	\eeas
	 	where $B=\frac{1}{2}$ and $D=\frac{7}{24}$. Since $B=\frac12$ and $D=\frac{7}{24}$ satisfy the hypotheses of Lemma~\ref{l2.2}, it follows that
	 	\beas \mid \gamma_3\mid \le \frac{1}{3\pi}.\eeas
	 	The above result is sharp with the function $f_3$ defined by (\ref{f3}).\\
	 	{\bf{D.}} Using (\ref{e3.3})--(\ref{e3.6}) in (\ref{e1.3}), then we obtain
	 	\bea\label{e3.8} \mid\gamma_4\mid&=&\frac{1}{2}\mid a_5-a_2a_4+a_2^2a_3-\frac{1}{2}a_3^2-\frac{1}{4}a_2^4\mid\nonumber\\&=&
	 	\frac{1}{8\pi}\mid \frac{3}{16}c_1^4 +\frac{1}{2}c_2^2+c_1c_3-\frac{7}{8}c_1^2c_2-c_4\mid\nonumber\\&=&\frac{1}{8\pi}\mid\delta c_1^4+\xi c_2^2+2\beta c_1c_3-\frac{3}{2}\gamma c_1^2c_2-c_4 \mid,\eea
	 	where $\delta=\frac{3}{16}$, $\xi=\beta=\frac{1}{2}$ and $\gamma=\frac{7}{12}$. Observe that $\beta=\xi=\frac12\in(0,1)$,
	 	so the first hypothesis of Lemma~\ref{l2.3} is satisfied. Also we deduce that:
	 	$$ 8\xi(1-\xi)\big\{(\beta\gamma-2\delta)^2+(\beta(\xi+\beta)-\gamma)^2\big\}+\beta(1-\beta)(\gamma-2\beta\xi)^2-4\beta^2(1-\beta)^2\xi(1-
	 	\xi)=-\frac{19}{576}<0. $$
	 	Hence all the hypotheses of Lemma~\ref{l2.3} are satisfied. Applying the lemma to \eqref{e3.8} yields 
	 	\beas \mid \gamma_4\mid \le \frac{1}{4\pi}.\eeas
	 		\par For sharpness, we choose $p(z)=\frac{1+z^4}{1-z^4}$. The corresponding extremal function has the expansion
	 	\bea\label{f4} 
	 	f_4(z)=z+\frac{1}{2\pi} z^5+\cdots.\eea
	 \end{proof}
	 \begin{theo}
	 	Let $f(z)=z+a_2z^2+a_3z^3+\cdots\in\mathcal{S}^{*}_{\arcsin}$. Then
	 	\beas\mid H_{2, 1}(F_{f}/2)\mid \ \le \ \frac{1}{4\pi^2}. \eeas
	 	The inequality is sharp.
	 \end{theo}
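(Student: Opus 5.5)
The plan is to avoid the Carath\'eodory coefficients $c_n$ and work directly with the Schwarz function $\omega(z)=\sum_{n\ge1} b_n z^n$ from \eqref{e3.1}. Since $zf'(z)/f(z)-1=\frac{2}{\pi}\arcsin\omega(z)$, we have
$$zF_f'(z)=\frac{2}{\pi}\arcsin\omega(z)\quad\text{and}\quad \arcsin w=w+\tfrac{w^3}{6}+\cdots .$$
Comparing coefficients of $z^n$ in $\sum_n 2n\gamma_n z^n=\frac{2}{\pi}\arcsin\omega(z)$ should give
$$\gamma_1=\frac{b_1}{\pi},\qquad \gamma_2=\frac{b_2}{2\pi},\qquad \gamma_3=\frac{1}{3\pi}\Big(b_3+\frac{b_1^3}{6}\Big).$$
Hence
$$H_{2,1}(F_f/2)=\gamma_1\gamma_3-\gamma_2^2=\frac{1}{\pi^2}\Big(\frac{b_1b_3}{3}+\frac{b_1^4}{18}-\frac{b_2^2}{4}\Big).$$
This agrees with the $c_n$-expressions already used for $\gamma_1,\gamma_2,\gamma_3$ in the previous theorem, which I would check as a consistency test.

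Next I would parametrize the Schwarz coefficients. This is equivalent to Lemma~\ref{l2.4} under $\omega=(p-1)/(p+1)$:
$$b_1=\tau_1,\qquad b_2=(1-|\tau_1|^2)\tau_2,\qquad b_3=(1-|\tau_1|^2)(1-|\tau_2|^2)\tau_3-(1-|\tau_1|^2)\ol{\tau_1}\tau_2^2,$$
with $\tau_j\in\ol\Omega$. The functional is rotation invariant, so as in the proof of Theorem~\ref{t3.2} we may take $\tau_1=\sqrt{t}\in[0,1]$. Substituting, I expect
$$\pi^2 H_{2,1}(F_f/2)=\frac{t^2}{18}-\frac{(1-t)(3+t)}{12}\,\tau_2^2+\frac{\sqrt t\,(1-t)}{3}(1-|\tau_2|^2)\tau_3 .$$

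Then I would split into cases. If $t=0$, the value is $|\tau_2|^2/4\le 1/4$. If $t=1$, it equals $1/18$. For $0<t<1$, write $s=|\tau_2|^2$ and apply the triangle inequality:
$$\pi^2|H_{2,1}(F_f/2)|\le \frac{t^2}{18}+\frac{(1-t)(3+t)}{12}\,s+\frac{\sqrt t(1-t)}{3}(1-s).$$
This is linear in $s$, so it suffices to check $s=0$ and $s=1$.

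At $s=1$ the bound is
$$\frac{t^2}{18}+\frac{(1-t)(3+t)}{12}=\frac14-\frac{t}{6}-\frac{t^2}{36}\le\frac14 .$$
At $s=0$ the bound is $g(u)=\frac{u^4}{18}+\frac{u(1-u^2)}{3}$ with $u=\sqrt t$. A one-variable calculus check, analogous to the $P_1$ analysis in Theorem~\ref{t3.2}, should give $\max_{[0,1]}g\approx 0.135<\frac14$. This yields $|H_{2,1}(F_f/2)|\le\frac{1}{4\pi^2}$.

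The main point of care is this last estimate, in particular that the triangle inequality does not lose too much. Had the $\tau_1^4$ and $\tau_2^2$ terms not combined favourably at $|\tau_2|=1$, one would need Lemma~\ref{l2.5}. Here the explicit computation above shows the crude bound suffices, provided the coefficient $(1-t)^2$ from $b_2^2$ is tracked correctly. For sharpness, take $\tau_1=0$ and $\tau_2=1$, i.e.\ $\omega(z)=z^2$, which is the function $f_2$ in \eqref{f2}. Then $\gamma_1=\gamma_3=0$ and $\gamma_2=\frac{1}{2\pi}$, so $|H_{2,1}(F_{f_2}/2)|=\frac{1}{4\pi^2}$.
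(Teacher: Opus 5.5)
Your proof is correct and shares the paper's skeleton. Your Schwarz-coefficient formulas are Lemma~\ref{l2.4} transported by $\omega=(p-1)/(p+1)$, and you use the same case split in $\tau_1$ and the same triangle inequality. The real difference is how you maximize over $\tau_2$: you note that the bound is affine in $s=|\tau_2|^2$, while the paper invokes Lemma~\ref{l2.5} with $A_2=\frac{\tau_1^3}{6(1-\tau_1^2)}$, $B_2=0$, $C_2=-\frac{3+\tau_1^2}{4\tau_1}$. (You also read off the $\gamma_n$ directly from $zF_f'=\frac{2}{\pi}\arcsin\omega$ rather than going through $a_n$ and $c_n$, which is cleaner.)

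This difference matters. The paper takes the branch $\Psi=1+|A_2|$, which is only your endpoint $s=0$. But $|C_2|>1$ for $0<\tau_1<1$, so $-4A_2C_2(C_2^{-2}-1)<0$ and the first two alternatives of Lemma~\ref{l2.5}(ii) fail. In fact $\Psi(A_2,0,C_2)=|A_2|+|C_2|$, attained at $|\tau_2|=1$. Hence the paper's interior bound $\frac{1}{18\pi^2}P_2(\tau_1)$ is invalid: for small $\tau_1$ and $|\tau_2|=1$ the functional is close to $\frac{1}{4\pi^2}$. The correct interior bound is exactly your $s=1$ value $\frac{1}{\pi^2}(\frac14-\frac t6-\frac{t^2}{36})$. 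The paper's final constant survives only because this never exceeds $\frac{1}{4\pi^2}$, so your elementary route is both shorter and more reliable here.

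What Lemma~\ref{l2.5} genuinely buys is the exact maximum of $|A+B\tau_2+C\tau_2^2|+1-|\tau_2|^2$ when $B\neq0$ and $AC<0$, as in the results for $\Gamma_3$ and $H_{2,1}(F_{f^{-1}}/2)$. There, bounding term by term can lose. With $B_2=0$ the termwise bound $|A_2|+|C_2|s$ is attained (take $\tau_2^2=-s$), so your argument loses nothing at that step.
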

	 \begin{proof}
	 	Since $f\in S^{*}_{\arcsin}$, we know that 
	 	\bea\label{m3.9} H_{2, 1}(F_{f}/2)=\gamma_1\gamma_3-\gamma_2^2.\eea
	 	Using (\ref{e1.3}), (\ref{e3.3})--(\ref{e3.5}) into (\ref{m3.9}) and we obtain the expression:
	 	\bea\label{m3.10} H_{2,1}(F_{f}/2)&=&\frac{1}{4}\left(a_2a_4-a_3^2+\frac{1}{12}a_2^4\right)\nonumber\\ &=&\frac{1}{576\pi^2}\left(48c_1c_3-36c_2^2-12c_1^2c_2+5c_1^4\right).\eea
	 	Substituting the Carath\'eodory coefficient representations \eqref{e2.2}--\eqref{e2.4} into \eqref{m3.10} and simplifying, we obtain 
	 	\bea\label{m3.11} 
	 	\mid H_{2,1}(F_{f}/2)\mid =\frac{1}{36\pi^2}\bigg\mid 2\tau_1^4-3\left(1-\tau_1^2\right)\left(3+\tau_1^2\right)\tau_2^2+12\left(1-\tau_1^2\right)\left(1-\mid\tau_2\mid^2\right)\tau_1\tau_3\bigg\mid.
	 	\eea
	 	Since $\tau_1\in[0,1]$ by Lemma~\ref{l2.4}, we first consider the boundary cases:
	 	\beas \mid H_{2,1}(F_{f}/2)\mid=
	 	\begin{cases}
	 		\frac{1}{4\pi^2}\mid\tau_2\mid^2\le \frac{1}{4\pi^2}\approx 0.02533, & \tau_1=0 \\
	 		\frac{1}{18\pi^2}\approx 0.00563, & \tau_1=1.
	 	\end{cases} \eeas 
	 	Let us consider the case $0<\tau_1<1$.
	 	Also, by applying the triangle inequality to (\ref{m3.11}), for $\tau_1\in (0, 1)$ and $\mid \tau_3\mid\le1$, we obtain the following inequality
	 	\bea\label{m3.12}
	 	\mid H_{2,1}(F_{f}/2)\mid&\le& \frac{1}{36\pi^2}\big\mid2\tau_1^4-3\left(1-\tau_1^2\right)\left(3+\tau_1^2\right)\tau_2^2\big\mid+\big\mid12\left(1-\tau_1^2\right)\left(1-\mid\tau_2\mid^2\right)\tau_1\tau_3\big\mid\nonumber\\&\le&\frac{1}{3\pi^2}\mid\tau_1\left(1-\tau_1^2\right)\left(\mid\left\mid\frac{\tau_1^3}{6(1-\tau_1^2)}-\frac{(3+\tau_1^2)}{4\tau_1}\tau_2^2\right\mid+1-\mid\tau_2\mid^2\right)\nonumber\\&\le& \frac{1}{3\pi^2}\mid\tau_1\left(1-\tau_1^2\right)\mid\Psi\left(A_2, B_2, C_2\right), 
	 	\eea
	 	where $\Psi\left(A_2, B_2, C_2\right)=\mid A_2+B_2\tau_2+C_2\tau_2^2\mid+1-\mid\tau_2\mid^2$ with \beas A_2=\frac{\tau_1^3}{6\left(1-\tau_1^2\right)}, \  B_2=0 \ \text{and} \ C_2=-\frac{3+\tau_1^2}{4\tau_1}.\eeas
	  Since $\tau_1\in (0, 1)$. Clearly, we observed that $A_2C_2\le 0$. \\ Here by similar calculation done in Theorem {\ref{t3.2}} and by Lemma \ref{l2.5} we obtain:
	 	%\bea\label{s1} 
	 	%\left(\frac{1}{c_2^2}-1\right)=-\frac{1}{\left(3+\tau_1^2\right)^2(9-\tau_1^1)(1-\tau_1^2)}<0, \ \text{for} \ 0<\tau_1<1.
	 	%\eea
	 	%Therefore, it is obvious that $-4A_2C_2(C_2^{-2}-1)>0$, also $4(1+\mid C_2\mid)>0$ for all $0<\tau_1<1$. \\Thus we consider that
	 	%\beas B_2^2<\min\{-4A_2C_2(C_2^{-2}-1), 4(1+\mid C_2\mid)\}.\eeas
	 %	Hence, by Lemma {\ref{l2.5}} we get
	 %	Here we consider the following cases:\\ 
	 %\bea\label{s2} \Psi\left(A_2, B_2, C_2\right) =1+ \mid A_2\mid+\frac{B_2^2}{4(1+\mid C_2\mid)}.\eea
	% Using (\ref{m3.12}) and (\ref{s2}) 
	% we obtain
	 \bea\label{s3} \mid H_{2,1}(F_{f}/2)\mid&\le& \frac{1}{18\pi^2}\left(\tau_1^4-6\tau_1^3+6\tau_1\right)=\frac{1}{18\pi^2}P_2(\tau_1), \eea
	 where $P_2(\tau_1)=\tau_1^4-6\tau_1^3+6\tau_1$. 
	 Thus it is sufficient to determine the maximum of the auxiliary polynomial $$P_2(\tau_1)=\tau_1^4-6\tau_1^3+6\tau_1,\qquad 0\le\tau_1\le1.$$
	 The graph of $P_2$ is shown in Figure~\ref{fig:P2}.
	 \begin{figure}[t]
	 	\centering
	 	\begin{tikzpicture}
	 		\begin{axis}[
	 			width=12cm,
	 			height=7cm,
	 			xmin=0,
	 			xmax=1.05,
	 			ymin=0,
	 			ymax=3,
	 			domain=0:1,
	 			samples=300,
	 			axis lines=left,
	 			xlabel={$\tau_1$},
	 			ylabel={$P_2(\tau_1)$},
	 			xlabel style={font=\large},
	 			ylabel style={font=\large},
	 			clip=false,
	 			ymax=3.25,
	 			ymin=0,
	 			tick label style={font=\small},
	 			grid=major,
	 			major grid style={gray!25},
	 			enlargelimits=false,
	 			]

	 			\addplot[
	 			very thick,
	 			blue,
	 			]
	 			{x^4-6*x^3+6*x};

	 			\addplot[
	 			only marks,
	 			mark=*,
	 			mark size=2.8pt,
	 			red
	 			]
	 			coordinates {(0.62192,2.42675)};

	 		\draw[
	 		red,
	 		ultra thick,
	 		densely dashed,
	 		line cap=round
	 		] (axis cs:0.62192,0)
	 		--
	 		(axis cs:0.62192,2.42675);
	 		
	 		\draw[
	 		red,
	 		ultra thick,
	 		densely dashed,
	 		line cap=round
	 		] (axis cs:0,2.42675)
	 		--
	 		(axis cs:0.62192,2.42675);

	 			\node[
	 			anchor=west,
	 			font=\small
	 			]
	 			at (axis cs:0.54,2.67)
	 			{$(\tau_0,P_2(\tau_0))$};
	 			
	 		\end{axis}
	 	\end{tikzpicture}
	 	
	 	\caption{Graph of the auxiliary polynomial
	 		$P_2(\tau_1)=\tau_1^4-6\tau_1^3+6\tau_1$
	 		on $[0,1]$. The unique interior maximum occurs near
	 		$\tau_0\approx0.62192$.}
	 	\label{fig:P2}
	 \end{figure}
	 Differentiating,
	$$ P_2'(\tau_1) = 2(2\tau_1^3-9\tau_1^2+3).$$
	 The unique zero of $P_2'$ in $(0,1)$ is $\tau_0\approx0.62192$.
	 Moreover, $	 P_2''(\tau_0)<0$,
	 so $P_2$ attains its unique interior maximum at $\tau_0$.
	 	
	 	\smallskip
	 	Now, from (\ref{s3}) we get
	 	\beas 
	 	\mid H_{2,1}(F_{f}/2)\mid&\le& \frac{1}{18\pi^2}P_2(\tau_0)\approx 0.01365.
	 	\eeas
	 	Comparing all the estimate values obtained above, we conclude that $$|H_{2,1}(F_f/2)|	\le	\frac1{4\pi^2}.$$\\
	 	To see the inequality is sharp we choose the function $f_2$ defined by (\ref{f2}).
	 	\end{proof}
	 \begin{theo}
	 	Let $f(z)=z+a_2z^2+a_3z^3+\cdots\in\mathcal{S}^{*}_{\arcsin}$ and let $\gamma_1,\gamma_2$ be given by \eqref{e1.3}. Then
	 	\beas
	 	-\frac{1}{\pi} \ \le \ |\gamma_2|-|\gamma_1| \ \le \ \frac{1}{2\pi}.\eeas
	 	Both inequalities are sharp.
	 \end{theo}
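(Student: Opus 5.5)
The plan is to express both $\gamma_1$ and $\gamma_2$ through the Carath\'eodory coefficients, exactly as in the proof of the previous theorem on $|\gamma_n|$, and then apply Lemma~\ref{l2.6}. From \eqref{e1.3} and \eqref{e3.3}--\eqref{e3.4} we already have
\[
\gamma_1=\frac{1}{2\pi}c_1,\qquad \gamma_2=\frac{1}{4\pi}\left(c_2-\frac12 c_1^2\right).
\]
Hence
\[
|\gamma_2|-|\gamma_1|=\frac{1}{4\pi}\left(\left|-\tfrac12 c_1^2+c_2\right|-2|c_1|\right)=\frac{1}{4\pi}\,\Phi(c_1,c_2).
\]
Here $\Phi$ is the functional of Lemma~\ref{l2.6} with $K=-\tfrac12$, $L=1$ and $J=2$. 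With these values $M=|4K+2L|=0$ and $|2K+L|=0$.

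\emph{Upper bound.} Since $|2K+L|=0<|L|+J=3$, the ``otherwise'' case of the first estimate in Lemma~\ref{l2.6} applies and gives $\Phi\le 2|L|=2$. This yields $|\gamma_2|-|\gamma_1|\le \frac{1}{2\pi}$. As a cross-check, Lemma~\ref{l2.1} with $v=\tfrac12$ gives $|c_2-\tfrac12 c_1^2|\le 2$, and the term $-2|c_1|$ is nonpositive, so the same bound follows directly.

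\emph{Lower bound.} Here $J=2\ge M+2|L|=2$, so the first case of the second estimate in Lemma~\ref{l2.6} applies. It gives $-\Phi\le 2J-M=4$, that is, $|\gamma_2|-|\gamma_1|\ge -\frac1\pi$. This also follows trivially from $-\Phi\le 2|c_1|\le 4$. The main point to check is that we land in the first case; it is borderline, since $J=M+2|L|$ holds with equality.

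\emph{Sharpness.} For the upper bound take $f_2$ from \eqref{f2}, i.e.\ $p(z)=\frac{1+z^2}{1-z^2}$, so $c_1=0$ and $c_2=2$. Then $\gamma_1=0$ and $|\gamma_2|=\frac{1}{2\pi}$, giving equality. For the lower bound take $f_1$ from \eqref{f1}, i.e.\ $p(z)=\frac{1+z}{1-z}$, so $c_1=c_2=2$. Then $\gamma_2=\frac{1}{4\pi}(2-2)=0$ and $\gamma_1=\frac1\pi$, giving $|\gamma_2|-|\gamma_1|=-\frac1\pi$. I will confirm both values against the expansions \eqref{f1} and \eqref{f2} ($a_2=2/\pi$, $a_3=2/\pi^2$ for $f_1$, and $a_2=0$, $a_3=1/\pi$ for $f_2$), which completes the argument.
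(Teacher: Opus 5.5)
Your proposal is correct and follows the paper's proof: the same Carath\'eodory expressions for $\gamma_1,\gamma_2$, an application of Lemma~\ref{l2.6}, and the same extremal functions $f_2$ (upper bound) and $f_1$ (lower bound). The differences are cosmetic. Your $(K,L,J)$ is twice the paper's $(-\tfrac14,\tfrac12,1)$. For the lower bound you use the branch $J\ge M+2|L|$, whereas the paper uses $J^2\le 2|L|(M+2|L|)$; since $M=0$ and $J=2|L|$, both conditions hold with equality and give the same value. Your elementary cross-checks via Lemma~\ref{l2.1} and $-\Phi\le J|c_1|$ are also valid.
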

	 \begin{proof}
	 	In view of (\ref{e1.3}) and (\ref{e3.3})--(\ref{e3.4}) we see that
	 	\bea\label{d3.14} \mid \gamma_2\mid -\mid\gamma_1\mid&=&\bigg\mid \frac{a_3}{2}-\frac{a_2^2}{4}\bigg\mid-\bigg\mid\frac{a_2}{2}\bigg\mid\nonumber\\&=&\frac{1}{2\pi}\left(\bigg\mid-\frac{c_1^2}{4}+\frac{c_2}{2}\bigg\mid-\bigg\mid c_1\bigg\mid\right)\nonumber\\&=&
	 	\frac{1}{2\pi}\left(\left|Kc_1^{2}+Lc_2\right|-J|c_1|\right)=\frac{1}{2\pi}\Phi\left(c_1, c_2\right),\eea 
	 	where $J=1$, $K=-\frac{1}{4}$ and $L=\frac{1}{2}$. Since $$|2K+L|=0<|L|+J=\frac32,$$
	 	the hypotheses of Lemma~\ref{l2.6} are satisfied.
	 	Therefore, by Lemma \ref{l2.6} we obtain
	 	\bea\label{d3.15}\mid \gamma_2\mid -\mid\gamma_1\mid =\frac{1}{2\pi}\Phi(c_1, c_2)\le \frac{1}{2\pi}.\eea
	 	To see the upper bound inequality is sharp we choose the function $f_2$ defined by (\ref{f2}).
	 	
	 	\smallskip
	 	For lower bound we see that $M=\mid 4K+2L\mid=0$. Moreover, $$M=|4K+2L|=0,$$
	 	and $$	J^2=1\le 2|L|(M+2|L|)=1.$$
	 	Hence Lemma~\ref{l2.6} yields 
	 	\bea\label{d3.16} \mid \gamma_2\mid-\mid\gamma_1\mid=\frac{1}{2\pi}\Phi(c_1, c_2)\ge -\frac{1}{\pi}.\eea
      Therefore, from (\ref{d3.15}) and (\ref{d3.16}) we obtain the desired bound.\\ 	For sharpness of the lower bound we consider the function $f_1$ introduced by (\ref{f1}). 
	 \end{proof}
	 \section{\bf{Sharp bound of logarithmic coefficient for inverse functions in the class $\mathcal{S}^{*}_{\arcsin}$}}
	  \begin{theo}
	 	Let $f(z)=z+a_2z^2+a_3z^3+\cdots\in\mathcal{S}^{*}_{\arcsin} $ and $\Gamma_1$, $\Gamma_2$, $\Gamma_3$ are given in \emph{(\ref{eil.4})}. Then
	 	$$
	 	|\Gamma_1| \ \le \ \frac{1}{\pi},
	 	\qquad |\Gamma_2| \ \le \ \frac{1}{2\pi},\qquad |\Gamma_3| \ \le \ \frac{1}{18\pi}+\frac{6}{\pi^3}.
	 	$$
	 	The estimates are sharp.  \end{theo}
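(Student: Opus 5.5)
The plan is to write each $\Gamma_n$ in terms of the Carath\'eodory coefficients $c_1,c_2,c_3$, using \eqref{eil.4} together with \eqref{e3.3}--\eqref{e3.5}. Each resulting functional is then estimated with the lemmas of Section~2, and sharpness is checked against $f_1$. Carrying out the $\Gamma_2$ step already shows that the stated bound $|\Gamma_2|\le\frac1{2\pi}$ fails and must be replaced by $\frac{2}{\pi^2}$.

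\textbf{The case $\Gamma_1$.} From \eqref{e3.3} we get $\Gamma_1=-\frac{1}{2\pi}c_1$. Since $|c_1|\le 2$, this gives $|\Gamma_1|\le \frac1\pi$, with equality for $f_1$ in \eqref{f1}.

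\textbf{The case $\Gamma_2$.} Substituting \eqref{e3.3}--\eqref{e3.4} gives
\[
\Gamma_2=-\frac{1}{4\pi}\left(c_2-\frac{\pi+4}{2\pi}c_1^2\right).
\]
Here $v=\frac{\pi+4}{2\pi}>1$, so Lemma~\ref{l2.1} yields
\[
|\Gamma_2|\le \frac{1}{4\pi}(4v-2)=\frac{2}{\pi^2}.
\]
This bound is attained by $f_1$: there $a_2=\frac2\pi$ and $a_3=\frac{2}{\pi^2}$, so $\Gamma_2=\frac{2}{\pi^2}$. Since $\frac{2}{\pi^2}>\frac1{2\pi}$, the inequality $|\Gamma_2|\le\frac1{2\pi}$ cannot hold as stated. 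The correct sharp bound is $|\Gamma_2|\le \frac{2}{\pi^2}$, extremal function $f_1$, and the proof should establish that corrected value.

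\textbf{The case $\Gamma_3$: the expansion.} Combining \eqref{e3.3}--\eqref{e3.5} with $\Gamma_3=-\frac12\left(a_4-4a_2a_3+\frac{10}{3}a_2^3\right)$ gives
\[
\Gamma_3=-\frac{1}{6\pi}\left(c_3-2Bc_1c_2+Dc_1^3\right),\qquad B=\frac{2\pi+9}{4\pi},\quad D=\frac{7\pi^2+54\pi+108}{24\pi^2}.
\]
For $f_1$ we have $c_1=c_2=c_3=2$, so the bracket equals $\frac{\pi^2+108}{3\pi^2}$. This gives exactly $|\Gamma_3|=\frac{1}{18\pi}+\frac{6}{\pi^3}$, so $f_1$ is the candidate extremal function. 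However $B\approx1.216>1$, so Lemma~\ref{l2.2} is not applicable, and a direct argument is needed.

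\textbf{The case $\Gamma_3$: the estimate.} After rotating so that $c_1\in[0,2]$, I substitute \eqref{e2.2}--\eqref{e2.4}. The bracket becomes
\[
a(\tau_1)+(1-\tau_1^2)\tau_1\bigl(b(\tau_1)\tau_2-2\tau_2^2\bigr)+2(1-\tau_1^2)(1-|\tau_2|^2)\tau_3,
\]
where $a(\tau_1)=(8D-8B+2)\tau_1^3$ and $b(\tau_1)=4-8B$. By the triangle inequality and $|\tau_3|\le1$, the problem reduces to bounding
\[
a(\tau_1)+2(1-\tau_1^2)\,\Psi(A,B',C),\qquad A=0,\quad B'=\frac{(4-8B)\tau_1}{2},\quad C=-\tau_1,
\]
with $\Psi$ as in Lemma~\ref{l2.5}. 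Since $AC=0$, case (i) of Lemma~\ref{l2.5} applies. It gives either $|B'|+|C|$ or $1+\frac{B'^2}{4(1-|C|)}$, depending on the size of $\tau_1$. This yields an explicit one-variable majorant $G(\tau_1)$ on $[0,1]$.

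\textbf{Main obstacle.} The hardest step is showing that $G$ attains its maximum at $\tau_1=1$, where $G(1)=8D-8B+2=\frac{\pi^2+108}{3\pi^2}$. I expect this to hold because $8B-4$ is large, which should push the maximum to the endpoint. I would first verify $G'(\tau_1)\ge0$ on each subinterval produced by Lemma~\ref{l2.5}, or compare $G(\tau_1)$ with $G(1)$ directly. If the crude triangle-inequality bound loses too much near intermediate $\tau_1$, I would keep the $\tau_2$-terms combined with $a(\tau_1)$, rather than separating $A=0$, and apply the $AC\ge0$ branch of Lemma~\ref{l2.5} with $A=\frac{a(\tau_1)}{2\tau_1(1-\tau_1^2)}$.
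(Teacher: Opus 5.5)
Your $\Gamma_1$ argument is fine, and your objection to the $\Gamma_2$ bound is correct. With $v=\frac{\pi+4}{2\pi}>1$, Lemma~\ref{l2.1} gives $|c_2-vc_1^2|\le 4v-2=\frac{8}{\pi}$. The function $f_1$ in \eqref{f1} has $\Gamma_2=\frac{2}{\pi^2}>\frac{1}{2\pi}$. The paper's proof gets $\frac{1}{2\pi}$ only by using the middle case of Lemma~\ref{l2.1}, which does not apply when $v>1$. So the $\Gamma_2$ part of the statement is false as printed, and the sharp bound is $\frac{2}{\pi^2}$, attained by $f_1$. Your expansion of $\Gamma_3$ and the value $\frac{1}{18\pi}+\frac{6}{\pi^3}$ at $f_1$ agree with the paper.

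The genuine gap is in the $\Gamma_3$ estimate: the route you propose cannot reach the bound. Put $K=\frac{\pi^2+108}{3\pi^2}\approx 3.981$. For $\tau_1\ge\frac{2\pi}{2\pi+9}$, your majorant with $A=0$ is $G(\tau_1)=K\tau_1^3+2\left(1+\frac{9}{\pi}\right)\tau_1(1-\tau_1^2)$. Its derivative at $1$ is $G'(1)=3K-4\left(1+\frac{9}{\pi}\right)\approx -3.52$, and $G$ reaches about $4.272$ at $\tau_1\approx 0.829$, which exceeds $G(1)=K$. So the step $G'\ge 0$ fails, and this route gives at best $|\Gamma_3|\le 4.272/(6\pi)\approx 0.227$, not $0.211$.

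Your fallback does not fix this as written. The right normalization factors out $2(1-\tau_1^2)$, not $2\tau_1(1-\tau_1^2)$, because the $\tau_3$-term carries no factor $\tau_1$. Then $A=\frac{K\tau_1^3}{2(1-\tau_1^2)}>0$, $B'=-\frac{9\tau_1}{\pi}$ and $C=-\tau_1<0$, so $AC<0$, not $AC\ge 0$. The case (i) formulas are just $\max_{0\le r\le 1}(|A|+|B'|r+|C|r^2+1-r^2)$, that is, the triangle inequality. Here that maximum is $|A|+|B'|+|C|$, which reproduces the same $G$.

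What is missing is case (ii) of Lemma~\ref{l2.5}, which uses the cancellation between $A$ and $C\tau_2^2$. Since $B'^2+4AC(C^{-2}-1)=\frac{(27-2\pi^2)\tau_1^2}{3\pi^2}>0$, the second alternative of (ii) never occurs. The first alternative covers $\tau_1<\frac{2\pi}{2\pi+9}$, and the branch $-|A|+|B'|+|C|$ covers $\tau_1$ up to about $0.4163$; on both pieces $2(1-\tau_1^2)\Psi<2.38$. For larger $\tau_1$ the branch $(|A|+|C|)\sqrt{1-B'^2/(4AC)}$ gives $2(1-\tau_1^2)\Psi=(2+(K-2)\tau_1^2)\sqrt{\tau_1^2+\frac{243(1-\tau_1^2)}{2\pi^2+216}}$, which increases to $K$ at $\tau_1=1$. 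This is the paper's route, and it is what makes $f_1$ extremal for $\Gamma_3$.
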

	 	\begin{proof}
	 Using the coefficient relations (\ref{e3.3})--(\ref{e3.5}), obtained in the proof of Theorem~\ref{t3.1}, together with the inverse coefficient identities (\ref{eil.4}), we estimate each logarithmic coefficient separately.\\
	 		{\bf{A.}} From (\ref{eil.4}) and (\ref{e3.3}), we get 
	 		\beas \mid\Gamma_1\mid =\frac{1}{2\pi}\mid c_1\mid \ \le \ \frac{1}{\pi}.\eeas
	 		Hence, $|\Gamma_1|\le\frac1\pi$,
	 		which proves the first estimate.\par
	 		For sharpness we choose the extremal function $f_1$ defined by (\ref{f1}). \\
	 		{\bf{B.}} Using (\ref{eil.4}) and (\ref{e3.3})--(\ref{e3.4}) we obtain
	 		\beas \mid \Gamma_2 \mid &=& \frac{1}{2}\mid a_3-\frac{3}{2}a_2^2\mid\\&=&\frac{1}{4\pi}\mid c_2-\left(\frac{4+\pi}{2\pi}\right)c_1^2\mid.\eeas
	 		Applying Lemma~\ref{l2.1} with $ 		\mu=\frac{4+\pi}{2\pi}$, we obtain
	 		\beas \mid \Gamma_2 \mid\le \frac{1}{2\pi}.\eeas
	 		For sharpness we employ the function $f_2$ introduced by (\ref{f2}).\\
	 		{\bf{C.}} Substituting (\ref{e3.3})--(\ref{e3.5}) into the identity for $\Gamma_3$ in (\ref{eil.4}) yields
	 		\bea\label{e4.1} 
	 		\mid\Gamma_3\mid&=&\frac{1}{2}\left\mid a_4-4a_2a_3+\frac{10}{3}a_2^3\right\mid\nonumber\\ &=&
	 		\frac{1}{6\pi}\left\mid c_3-\left(\frac{2\pi+9}{2\pi}\right)c_1c_2+\left(\frac{7\pi^2+54\pi+108}{24\pi^2}\right)c_1^3\right\mid.
	 		\eea
	 		Now substituting value of $c_1$, $c_2$ and $c_3$ from (\ref{e2.2})--(\ref{e2.4}) into (\ref{e4.1}), then we have
	 		\bea\label{e4.2} \mid\Gamma_3\mid& =&\frac{1}{6\pi}\bigg\mid \left(\frac{\pi^2+108}{3\pi^2}\right)\tau_1^3-\frac{18}{\pi}\left(1-\tau_1^2\right)\tau_1\tau_2-2\left(1-\tau_1^2\right)\tau_1\tau_2^2\nonumber\\&&+2\left(1-\tau_1^2\right)\left(1-\mid\tau_2\mid^2\right)\tau_3\bigg\mid\eea
	 		Here, by Lemma \ref{l2.4}, we have $\tau_1\in [0, 1]$ and $\mid\tau_2\mid\le1$, $\mid\tau_3\mid\le 1$. Since $\tau_1\in[0,1]$, we distinguish the boundary cases $\tau_1=0$, $\tau_1=1$ and the interior case $0<\tau_1<1$.\\
	 		{\bf{Case--1.}} Let us choose $\tau_1=0$. Equation (\ref{e4.2}) reduces to \beas \mid \Gamma_3\mid= \frac{1}{3\pi}\left(1-\mid\tau_2\mid^2\right)\le \frac{1}{3\pi}\approx 0.10610.\eeas
	 		{\bf{Case--2.}} Let us choose $\tau_1=1$. Equation (\ref{e4.2}) simplifies to
	 		\beas \mid \Gamma_3\mid=\frac{\pi^2+108}{18\pi^3}\approx 0.21119.\eeas
	 		{\bf{Case--3.}} Let us choose $0<\tau_1<1$. Applying the triangle inequality to (\ref{e4.2}) together with $|\tau_3|\le1$, we arrive at
	 		\bea\label{e4.3} \mid\Gamma_3\mid &\le&\frac{1}{6\pi}\bigg(\bigg\mid\left(\frac{\pi^2+108}{3\pi^2}\right)\tau_1^3-\frac{18}{\pi}\left(1-\tau_1^2\right)\tau_1\tau_2-2\left(1-\tau_1^2\right)\tau_1\tau_2^2\bigg\mid+\bigg\mid2\left(1-\tau_1^2\right)\left(1-\mid\tau_2\mid^2\right)\tau_3\bigg\mid\bigg)\nonumber\\&\le&\frac{1}{3\pi}\big\mid1-\tau_1^2\big\mid\left(\left\mid\frac{\left(\pi^2+108\right)\tau_1^3}{6\pi^2(1-\tau_1^2)}-\frac{9}{\pi}\tau_1\tau_2-\tau_1\tau_2^2\right\mid+\big\mid\left(1-\mid\tau_2\mid^2\right)\big\mid\right)\nonumber\\&\le&\frac{1}{3\pi}\left(1-\tau_1^2\right)\Psi(A_3, B_3, C_3),\eea
	 		where, for convenience, $\Psi(A_3, B_3, C_3)=\mid A_3+B_3\tau_2+C_3\tau_2^2\mid+(1-\mid\tau_2\mid^2)$  with the values of \bea\label{e4.4} A_3=\frac{\left(\pi^2+108\right)\tau_1^3}{6\pi^2\left(1-\tau_1^2\right)}, \  B_3=-\frac{9\tau_1}{\pi} \ \text{and} \ C_3=-{\tau_1}.\eea
	 		In view of Lemma~\ref{l2.5}, we now distinguish the following cases according to the values of $A_3$, $B_3$ and $C_3$.
	 		
	 		\smallskip
	 		Suppose  $\tau_1\in (0, 1)$, then it is clear that $A_3C_3<0$.
	 		
	 		Since $A_3C_3<0$, Case (ii) of Lemma~\ref{l2.5} applies. Accordingly, we distinguish the following subcases.\\
	 		{\bf\underline{Subcase--3.1.}} Let us consider $0<\tau_1<\frac{2\pi}{2\pi+9}$ and let $-4A_3C_3\left(C_3^{-2}-1\right)<B_3^2$ and $\mid B_3\mid < 2(1-\mid C_3\mid)$.
	 		Now, from (\ref{e4.4}) we have
	 		\beas B_3^2+4A_3C_3\left(C_3^{-2}-1\right)=\frac{\left(2\pi^2+351\right)\tau_1^2}{3\pi^2}>0, \ \text{for} \ \tau_1\in(0, 1). \eeas
	 		Again, for $0<\tau_1<\frac{2\pi}{2\pi+9}\approx 0.41112$, we have $\mid B_3\mid<2\left(1-\mid C_3\mid\right)$. Therefore, by Lemma \ref{l2.5} we get
	 		\beas \Psi(A_3, B_3, C_3)&=&1-\mid A_3\mid+\frac{B_3^2}{4\left(1-\mid C_3\mid\right)}\\&=&\frac{1}{12\pi^2(1-\tau_1^2)}\left((27-2\pi^2)\tau_1^3+(243-12\pi^2)\tau_1^2+12\pi^2\right).\eeas
	 		Therefore, from (\ref{e4.3}) we get 
	 		\beas\mid\Gamma_3\mid &\le&\frac{1}{36\pi^3}\left((27-2\pi^2)\tau_1^3+(243-12\pi^2)\tau_1^2+12\pi^2\right)=\frac{1}{36\pi^3}P_3(\tau_1),  \eeas where $P_3(\tau_1)=(27-2\pi^2)\tau_1^3+(243-12\pi^2)\tau_1^2+12\pi^2$.
	 		Since, $0<\tau_1<\frac{2\pi}{2\pi+9}$, then it is easy to verify $P_3(\tau_1)$ is an increasing function.  So, the upper bound  
	 		\beas \mid\Gamma_3\mid \ \le 0.12542 \ (\text{approx}).\eeas\\
	 		{\bf\underline{Subcase--3.2.}} Let us consider the subcase $B_3^2<\min\left\{4(1+|C_3|)^2, -4A_3C_3(C_3^{-2}-1)\right\}$, which in particular requires $B_3^2<-4A_3C_3(C_3^{-2}-1)$.
	 		Using (\ref{e4.4}) it follows that \beas B_3^2\not<-4A_3C_3(C_3^{-2}-1) \ \text{for} \tau_1\in (0, 1).\eeas.
	 		Hence this alternative in Lemma~\ref{l2.5} cannot occur. So 
	 		\beas \Psi(A_3, B_3, C_3)\not=1+|A_3|+\frac{B_3^2}{4(1+|C_3|)}.\eeas\\
	 		{\bf\underline{Subcase--3.3.}} Let us consider the cases $\mid A_3B_3\mid\ge\mid C_3\mid \left(\mid B_3\mid +4\mid A_3\mid\right)$. Now, from (\ref{e4.4}) it follows that \beas 
	 		\mid A_3B_3\mid-\mid C_3\mid \left(\mid B_3\mid +4\mid A_3\mid\right)&=& \frac{\tau_1^2}{6\pi^3(1-\tau_1^2)}\left((-4\pi^3+63\pi^2-432\pi+972)\tau_1^2-54\pi^2\right)\\&=&\frac{\tau_1^2}{6\pi^3(1-\tau_1^2)}P_4(\tau_1),
	 		\eeas
	 		where $P_4(\tau_1)=(-4\pi^3+63\pi^2-432\pi+972)\tau_1^2-54\pi^2$. Since $0<\tau_1<1$, then $P_4(\tau_1)<0$. 
	 		Therefore this branch of Lemma~\ref{l2.5} is excluded and we have
	 		\beas \Psi(A_3, B_3, C_3)\neq|A_3|+\mid B_3\mid-|C_3|.\eeas \\ 
	 		{\bf\underline{Subcase--3.4.}} Let us consider the case $\mid A_3B_3\mid\le\mid C_3\mid \left(\mid B_3\mid -4\mid A_3\mid\right)$.
	 		Now, from (\ref{e4.4}) it follows that \beas 
	 		\mid A_3B_3\mid-\mid C_3\mid \left(\mid B_3\mid -4\mid A_3\mid\right)&=& \frac{\tau_1^2}{6\pi^3(1-\tau_1^2)}\left((4\pi^3+63\pi^2+432\pi+972)\tau_1^2-54\pi^2\right)\\&=&\frac{\tau_1^2}{6\pi^3(1-\tau_1^2)}P_5(\tau_1),
	 		\eeas
	 		where $P_5(\tau_1)=(4\pi^3+63\pi^2+432\pi+972)\tau_1^2-54\pi^2$. It follows by direct computation for $\tau_1\le\sqrt{\frac{54\pi^2}{4\pi^3+63\pi^2+432\pi+972}}\approx 0.41632$, we have $P_5(\tau_1)\le 0$. 
	 		Thus it sufficient to consider the interval $\frac{2\pi}{2\pi+9}\le \tau_1\le\sqrt{\frac{54\pi^2}{4\pi^3+63\pi^2+432\pi+972}}$. Therefore Lemma~\ref{l2.5} yields
	 		\beas \Psi(A_3, B_3, C_3)=-|A_3|+\mid B_3\mid+|C_3|.\eeas 
	 		Since, $\frac{2\pi}{2\pi+9}\le \tau_1\le\sqrt{\frac{54\pi^2}{4\pi^3+63\pi^2+432\pi+972}}$, then from (\ref{e4.3}) we get 
	 		\beas\mid\Gamma_3\mid &\le&\frac{1}{3\pi}\left(1-\tau_1^2\right)\Psi(A_3, B_3, C_3)\\&\le&\frac{1}{18\pi^3}\left(-(7\pi^2+54\pi+108)\tau_1^3+(6\pi^2+54\pi)\tau_1\right)=\frac{1}{18\pi^3}P_6(\tau_1),\eeas
	 		where $P_6(\tau_1)=-(7\pi^2+54\pi+108)\tau_1^3+(6\pi^2+54\pi)\tau_1$ is increasing function in the interval $\frac{2\pi}{2\pi+9}\le \tau_1\le\sqrt{\frac{54\pi^2}{4\pi^3+63\pi^2+432\pi+972}}$. 
	 		\begin{figure}[t]
	 			\centering
	 			\begin{tikzpicture}
	 				
	 				\pgfmathsetmacro{\a}{2*pi/(2*pi+9)}
	 				\pgfmathsetmacro{\b}{sqrt((54*pi*pi)/(4*pi*pi*pi+63*pi*pi+432*pi+972))}
	 				
	 				\begin{axis}[
	 					width=12cm,
	 					height=7cm,
	 					xmin=\a,
	 					xmax=\b,
	 					domain=\a:\b,
	 					samples=300,
	 					axis lines=left,
	 					xlabel={$\tau_1$},
	 					ylabel={$P_6(\tau_1)$},
	 					xlabel style={font=\large},
	 					ylabel style={font=\large},
	 					tick label style={font=\small},
	 					grid=major,
	 					major grid style={gray!25},
	 					enlargelimits=false,
	 					]

	 					\addplot[
	 					very thick,
	 					blue,
	 					]
	 					{-(7*pi*pi+54*pi+108)*x^3
	 						+(6*pi*pi+54*pi)*x};

	 					\addplot[
	 					only marks,
	 					mark=*,
	 					red,
	 					mark size=2.5pt
	 					]
	 					coordinates
	 					{
	 						(\a,{-(7*pi*pi+54*pi+108)*(\a)^3
	 							+(6*pi*pi+54*pi)*(\a)})
	 					};

	 					\addplot[
	 					only marks,
	 					mark=*,
	 					red,
	 					mark size=2.5pt
	 					]
	 					coordinates
	 					{
	 						(\b,{-(7*pi*pi+54*pi+108)*(\b)^3
	 							+(6*pi*pi+54*pi)*(\b)})
	 					};

	 					\draw[->,thick]
	 					(axis cs:0.413,48)
	 					--
	 					(axis cs:0.416,49);
	 					
	 					\node[font=\small]
	 					at (axis cs:0.4145,47.7)
	 					{Increasing};
	 					
	 				\end{axis}
	 			\end{tikzpicture}
	 			
	 			\caption{Graph of the auxiliary polynomial
	 				\[
	 				P_6(\tau_1)=-(7\pi^2+54\pi+108)\tau_1^3
	 				+(6\pi^2+54\pi)\tau_1
	 				\]
	 				on the interval
	 				\[
	 				\frac{2\pi}{2\pi+9}\le\tau_1\le
	 				\sqrt{\frac{54\pi^2}
	 					{4\pi^3+63\pi^2+432\pi+972}}.
	 				\]
	 				}
	 			\label{fig:P5}
	 		\end{figure}

	 		Therefore, the required bound 
	 		\beas\mid\Gamma_3\mid\le \frac{1}{18\pi^3}P_6(0.41632)\approx 0.12589. \eeas\\ 
	 		{\bf\underline{Subcase--3.5.}} Let us consider $\sqrt{\frac{54\pi^2}{4\pi^3+63\pi^2+432\pi+972}}<\tau_1<1$. It remains to consider the last case in Lemma \ref{l2.5}.
	 		\beas \Psi(A_3, B_3, C_3)=(|A_3|+|C_3|)\sqrt{1-\frac{B_3^2}{4A_3C_3}}.\eeas
	 		Here, from (\ref{e4.3}) and (\ref{e4.4}) we get
	 		\beas\mid \Gamma_3\mid\le \frac{\left((108-5\pi^2)\tau_1^2+6\pi^2\right)}{18\pi^3\sqrt{\left(2\pi^2+216\right)}}\sqrt{\left(243-(27-2\pi^2)\tau_1^2\right)}=\frac{1}{18\pi^3\sqrt{\left(2\pi^2+216\right)}}\beta(\tau_1),\eeas where $\beta(\tau_1)=\left((108-5\pi^2)\tau_1^2+6\pi^2\right)\sqrt{\left(243-(27-2\pi^2)\tau_1^2\right)}$.
	 	Here, it is easy to see that $\beta^{\prime}(\tau_1)>0$, then $\beta(\tau_1)$ is an increasing function. Thus the desired upper bound 
	 		\beas\mid\Gamma_3\mid \le \frac{1}{18\pi^3\sqrt{\left(2\pi^2+216\right)}}\beta(1)=\frac{1}{18\pi}+\frac{6}{\pi^3}\approx 0.21119. \eeas
	 		Combining Cases~1--3 together with Subcases~3.1--3.5, we conclude that
	 		$|\Gamma_3|\le\frac1{18\pi}+\frac6{\pi^3}$.\par 		The estimate is sharp, since equality is attained by the extremal function
	 		$f_1$ defined in (\ref{f1}).
	 	\end{proof}
	 	\begin{theo}
	 		Let $f(z)=z+a_2z^2+a_3z^3+\cdots\in\mathcal{S}^{*}_{\arcsin} $. Then Hankel determinant
	 		\beas\mid H_{2, 1}(F_{f^{-1}}/2)\mid \ \le \ \left(\frac{1}{18\pi^2}+\frac{2}{\pi^4}\right). \eeas
	 		The inequality is sharp.
	 	\end{theo}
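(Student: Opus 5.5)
We begin with the inverse coefficient formulas (\ref{eil.4}). A direct expansion gives
\[
H_{2,1}(F_{f^{-1}}/2)=\Gamma_1\Gamma_3-\Gamma_2^2=\frac14\left(a_2a_4-a_3^2-a_2^2a_3+\frac{13}{12}a_2^4\right).
\]
We then substitute (\ref{e3.3})--(\ref{e3.5}) to obtain a homogeneous expression
\[
\frac{1}{\pi^2}\left(\alpha_1c_1c_3+\alpha_2c_2^2+\alpha_3c_1^2c_2+\alpha_4c_1^4\right),
\]
where the $\alpha_j$ are explicit in $\pi$. Unlike in Theorem~\ref{t3.2}, the term $-a_2^2a_3$ makes $\alpha_3$ and $\alpha_4$ depend on $1/\pi$. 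As a check on the target value, the function $f_1$ of (\ref{f1}) ($c_1=2$) gives $a_2a_4=\frac{2}{9\pi^2}+\frac{8}{3\pi^4}$, $a_3^2=\frac4{\pi^4}$, $a_2^2a_3=\frac8{\pi^4}$ and $\frac{13}{12}a_2^4=\frac{52}{3\pi^4}$. Hence
\[
H_{2,1}(F_{f_1^{-1}}/2)=\frac{1}{18\pi^2}+\frac{2}{\pi^4},
\]
which will give sharpness once the inequality is proved.

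Next we insert the Libera--Złotkiewicz representation (\ref{e2.2})--(\ref{e2.4}) of Lemma~\ref{l2.4}, using rotation invariance to take $\tau_1\in[0,1]$. The result has the form
\[
\frac{1}{\pi^2}\Big|\,\lambda_0(\tau_1)\tau_1^4+\lambda_1(\tau_1)(1-\tau_1^2)\tau_1^2\tau_2+\lambda_2(\tau_1)(1-\tau_1^2)\tau_2^2+\kappa\,(1-\tau_1^2)(1-|\tau_2|^2)\tau_1\tau_3\Big|,
\]
where a genuine linear term in $\tau_2$ now appears, coming from $a_2^2a_3$.

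The boundary cases are handled directly. For $\tau_1=0$ only the $a_3^2$ term survives, giving $\frac{1}{4\pi^2}|\tau_2|^2\le\frac1{4\pi^2}\approx0.02533$. For $\tau_1=1$ we get exactly $\frac{1}{18\pi^2}+\frac2{\pi^4}\approx0.02616$, which is larger; the margin is small, so the constants must be computed carefully.

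For $0<\tau_1<1$ we use the triangle inequality with $|\tau_3|\le1$. This bounds the quantity by $\frac{\kappa}{\pi^2}\tau_1(1-\tau_1^2)\Psi(A_4,B_4,C_4)$, where
\[
A_4=\frac{\lambda_0\tau_1^3}{\kappa(1-\tau_1^2)},\qquad B_4=\frac{\lambda_1\tau_1}{\kappa},\qquad C_4=\frac{\lambda_2}{\kappa\tau_1},
\]
with $\lambda_0,\lambda_1,\lambda_2$ evaluated at $\tau_1$. We then apply Lemma~\ref{l2.5}. As in Theorem~\ref{t3.2}, $|C_4|$ is of order $(3+\tau_1^2)/(4\tau_1)$, which exceeds $1$ on $(0,1)$, and $A_4$ and $C_4$ should have opposite signs. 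So case (ii) of Lemma~\ref{l2.5} applies, and we go through its subcases as in Subcases~3.1--3.5 of the preceding proof. For each subcase we first determine the $\tau_1$-range where it is active by solving the defining polynomial inequalities. On that range we write the resulting bound as $\pi^{-4}P(\tau_1)$ for an explicit polynomial or algebraic function $P$, and maximize it.

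The main obstacle is this last maximization. The competing values ($0.0253$ at $\tau_1=0$ and $0.0262$ at $\tau_1=1$) are very close, so the interior bounds must stay below $\frac{1}{18\pi^2}+\frac{2}{\pi^4}$ on every subinterval. In particular, the crude triangle-inequality bound must not overshoot near $\tau_1\to0^+$, where $\Psi\approx1+|A_4|+\dots$ and $\tau_1(1-\tau_1^2)\Psi$ should tend to the $\tau_1=0$ value. I would first check whether the branch valid near $\tau_1=1$ is the square-root branch $(|A_4|+|C_4|)\sqrt{1-B_4^2/(4A_4C_4)}$, and try to show that the resulting function is increasing, so that its supremum is the $\tau_1=1$ value. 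On the remaining small-$\tau_1$ ranges, a derivative sign check or a numerical bound on the polynomial should keep the estimate below $0.0261$. If some subcase overshoots, I would fall back on a sharper grouping of the $\tau_2$-terms before applying the triangle inequality.
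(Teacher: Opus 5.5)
Your plan follows the paper's own route: the identity $H_{2,1}(F_{f^{-1}}/2)=\frac14\bigl(a_2a_4-a_3^2-a_2^2a_3+\frac{13}{12}a_2^4\bigr)$, Lemma~\ref{l2.4}, the triangle inequality in $\tau_3$, and case (ii) of Lemma~\ref{l2.5} with $A_4=\frac{(\pi^2+36)\tau_1^3}{6\pi^2(1-\tau_1^2)}$, $B_4=-\frac{3\tau_1}{\pi}$, $C_4=-\frac{3+\tau_1^2}{4\tau_1}$. Your evaluation at $f_1$ is correct. The gap is the step you leave open: that on the small-$\tau_1$ range the estimate stays below $\frac{1}{18\pi^2}+\frac{2}{\pi^4}\approx 0.026161$. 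It does not.

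Since $|C_4|>1$ on $(0,1)$, both alternatives $1\mp|A_4|+\cdots$ of case (ii) are impossible, so $\Psi\approx 1+|A_4|$ near $0^+$ is the wrong branch. For $0<\tau_1\le 0.4735$ the applicable branch is $-|A_4|+|B_4|+|C_4|$, and the bound becomes
\[
\frac{1}{36\pi^4}\Bigl(9\pi^2+6\pi(6-\pi)\tau_1^2-(5\pi^2+36\pi+72)\tau_1^4\Bigr).
\]
This equals $\frac{1}{4\pi^2}$ at $\tau_1=0$ but initially increases. At $\tau_1^2=\frac{3\pi(6-\pi)}{5\pi^2+36\pi+72}$, i.e.\ $\tau_1\approx 0.339$, it reaches $\frac{1}{4\pi^2}\bigl(1+\frac{(6-\pi)^2}{5\pi^2+36\pi+72}\bigr)\approx 0.026213$.

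Your fallback of regrouping the $\tau_2$-terms cannot help, because no slack remains. The maximum in Lemma~\ref{l2.5} is attained at $\tau_2=1$, which also kills the $\tau_3$-term, so this bound is an actual value of the functional. Concretely, take $p(z)=\frac{1+2tz+z^2}{1-z^2}$, so $\omega(z)=z\frac{t+z}{1+tz}$ and $c_1=2t$, $c_2=2$, $c_3=2t$. Substituting into \eqref{h4.8} gives exactly $H_{2,1}(F_{f^{-1}}/2)=-\frac{1}{36\pi^4}\bigl(9\pi^2+6\pi(6-\pi)t^2-(5\pi^2+36\pi+72)t^4\bigr)$. For $t\approx 0.339$ a direct check gives $\Gamma_1\approx-0.10790$, $\Gamma_2\approx-0.11758$, $\Gamma_3\approx 0.11481$, hence $|H_{2,1}(F_{f^{-1}}/2)|\approx 0.026213>\frac{1}{18\pi^2}+\frac{2}{\pi^4}$.

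So the inequality as stated is false, and no argument can close this step. The paper's proof reaches the stated constant only through an arithmetic slip in its Subcase 3.4: it has $P_8(\tau_0)\approx 91.922$ but reports $\frac{1}{36\pi^4}P_8(\tau_0)\approx 0.025981$ instead of $\approx 0.026213$. Done correctly, the same case analysis gives the sharp bound $\frac{1}{4\pi^2}\bigl(1+\frac{(6-\pi)^2}{5\pi^2+36\pi+72}\bigr)$, attained by the function above rather than by $f_1$. This uses that the square-root branch on $(0.4735,1)$ never exceeds its endpoint value $\frac{1}{18\pi^2}+\frac{2}{\pi^4}$ at $\tau_1=1$.
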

	 	\begin{proof}
	 	Since $f\in S^{*}_{\arcsin}$, we know that 
	 	\bea\label{h4.7} H_{2, 1}(F_{f^{-1}}/2)=\Gamma_1\Gamma_3-\Gamma_2^2.\eea
	 	Using the identities \eqref{eil.4} and
	 	\eqref{e3.3}--\eqref{e3.5} in \eqref{h4.7},
	 	we obtain:
	 	\bea\label{h4.8} H_{2,1}(F_{f^{-1}}/2)&=&\frac{1}{4}\left(a_2a_4-a_2^2a_3-a_3^2+\frac{13}{12}a_2^4\right)\nonumber\\ &=&\frac{1}{12\pi^2}\left(c_1c_3-\frac{3}{4}c_2^2-\left(\frac{\pi+6}{4\pi}\right)c_1^2c_2+\left(\frac{5\pi^2+36\pi+72}{48\pi^2}\right)c_1^4\right).\eea
	 	Now substituting (\ref{e2.2})--(\ref{e2.4}) into (\ref{h4.8}) and then simplifying, we get 
	 	\bea\label{h4.9} 
	 	\mid H_{2,1}(F_{f^{-1}}/2)\mid &&=\frac{1}{12\pi^2}\bigg\mid\left(\frac{2\pi^2+72}{3\pi^2}\right)\tau_1^4-\frac{12}{\pi}\left(1-\tau_1^2\right)\tau_1^2\tau_2-\left(1-\tau_1^2\right)\left(3+\tau_1^2\right)\tau_2^2\nonumber\\&&+4\left(1-\tau_1^2\right)\left(1-\mid\tau_2\mid^2\right)\tau_1\tau_3\bigg\mid.
	 	\eea	
	 		By Lemma~\ref{l2.4}, $$	\tau_1\in[0,1],\qquad
	 		|\tau_2|\le1,\qquad	|\tau_3|\le1.$$
	 		We first examine the boundary cases $\tau_1=0$ and $\tau_1=1$ and then consider the interior interval $0<\tau_1<1$. \\
	 		{\bf\underline{Case--1.}} Let us choose $\tau_1=0$.
	 		 Then from (\ref{h4.9}), we get \beas\mid H_{2,1}(F_{f^{-1}}/2)\mid \ \le \ \frac{1}{4\pi^2}\mid\tau_2\mid^2\le \frac{1}{4\pi^2}\approx 0.02533. \eeas \\
	 		 {\bf\underline{Case--2.}} Let us choose $\tau_1=1$.
	 		 Then from (\ref{h4.9}), we get \beas\mid H_{2,1}(F_{f^{-1}}/2)\mid \ \le \ \left(\frac{1}{18\pi^2}+\frac{2}{\pi^4}\right)\approx 0.026161. \eeas\\
	 		 {\bf\underline{Case--3.}} Let us choose $0<\tau_1<1$. Then  applying the triangle inequality to (\ref{h4.9}), for $\tau_1\in (0, 1)$ and $\mid \tau_3\mid\le1$, we obtain the following inequality
	 	\bea\label{h4.10} \mid H_{2,1}(F_{f^{-1}}/2)\mid&\le&\frac{1}{3\pi^2}\tau_1\left(1-\tau_1^2\right)\bigg(\left\mid\frac{(\pi^2+36)\tau_1^3}{6\pi^2(1-\tau_1^2)}-\frac{3}{\pi}\tau_1\tau_2-\left(\frac{3+\tau_1^2}{4\tau_1}\right)\tau_2^2\right\mid+ \big\mid\left(1-\mid\tau_2\mid^2\right)\big\mid\bigg)\nonumber\\&\le&\frac{1}{3\pi^2}\tau_1\left(1-\tau_1^2\right)\Psi(A_4, B_4, C_4),\eea
	 	where $\Psi(A_4, B_4, C_4)=\mid A_4+B_4\tau_2+C_4\tau_2^2\mid+(1-\mid\tau_2\mid^2)$  with the values of \bea\label{h4.11} A_4=\frac{(\pi^2+36)\tau_1^3}{6\pi^2(1-\tau_1^2)}, \  B_4=-\frac{3}{\pi}\tau_1 \ \text{and} \ C_4=-\frac{3+\tau_1^2}{4\tau_1}.\eea
	 	In view of Lemma~\ref{l2.5}, we now distinguish the following cases according to the values of $A_4$, $B_4$ and $C_4$.
	 	
	 	\smallskip
	 	For $0<\tau_1<1$, it follows from
	 	\eqref{h4.11} that $A_4C_4<0.$
	 	Hence we apply Case (ii) of Lemma~\ref{l2.5}.
	 	So we consider following subcases:\\
	 	{\bf\underline{Subcase--3.1.}} Let us consider $-4A_4C_4\left(C_4^{-2}-1\right)<B_4^2$ and $\mid B_4\mid < 2(1-\mid C_4\mid)$.
	 	Now, from (\ref{h4.11}) we deduce that
	 	\beas \mid B_4\mid-2\left(1-\mid C_4\mid\right)=\frac{1}{2\pi\tau_1}\left((\pi+6)\tau_1^2-4\pi\tau_1+3\pi\right)>0, \ \text{for} \ \tau_1\in(0, 1). \eeas
	 	Therefore, by Lemma \ref{l2.5} we get
	 	\beas \Psi(A_4, B_4, C_4)\neq1-\mid A_4\mid+\frac{B_4^2}{4\left(1-\mid C_4\mid\right)}.  \eeas\\
	 	{\bf\underline{Subcase--3.2.}} Let us consider the case $B_4^2<\min\left\{4(1+|C_4|)^2, -4A_4C_4(C_4^{-2}-1)\right\}$. Now from (\ref{h4.11}) we deduce that $\left({C_4^{-2}}-1\right)<0$.
	 	As, we have $A_4C_4<0$, then $-4A_4C_4\left({C_4^{-2}}-1\right)<0$ for all $0<\tau_1<1$. Also, it follows that 
	 	\beas\min\left\{4(1+|C_4|)^2, -4A_4C_4(C_4^{-2}-1)\right\}=-4A_4C_4\left({c_4^{-2}}-1\right).\eeas
	 	Thus, it is easy to verify that $B_4^2\not<\min\left\{4(1+|C_4|)^2, -4A_4C_4(C_4^{-2}-1)\right\}$.\\
	 	Therefore, by Lemma \ref{l2.5} it is clear that 
	 	$ \Psi(A_4, B_4, C_4)\neq1+\mid A_4\mid+\frac{B_4^2}{4\left(1+\mid C_4\mid\right)}$.\\
	 {\bf\underline{Subcase--3.3.}} Let us consider the cases $\mid A_4B_4\mid\ge\mid C_4\mid \left(\mid B_4\mid +4\mid A_4\mid\right)$. Since $0<\tau_1<1$, then from (\ref{h4.11}) we have
	 	\beas &&\mid A_4B_4\mid-\mid C_4\mid \left(\mid B_4\mid +4\mid A_4\mid\right)\\&&=-\frac{1}{12\pi^3\left(1-\tau_1^2\right)}\big( (2\pi^3-15\pi^2+72\pi-216)\tau_1^4+6\pi(\pi^2-3\pi+36)\tau_1^2+27\pi^2\big)<0.\eeas
	 	Hence, using Lemma \ref{l2.5} we get
	 	$\Psi(A_4, B_4, C_4)\neq\mid A_4\mid+\mid B_4\mid-\mid C_4\mid$.\\
	 	{\bf\underline{Subcase--3.4.}} Let us consider the cases $\mid A_4B_4\mid\le\mid C_4\mid \left(\mid B_4\mid -4\mid A_4\mid\right)$. So, from (\ref{h4.11}) we have
	 	\beas &&\mid A_4B_4\mid-\mid C_4\mid \left(\mid B_4\mid -4\mid A_4\mid\right)\\&&=\frac{1}{12\pi^3\left(1-\tau_1^2\right)}\big( (2\pi^3+15\pi^2+72\pi+216)\tau_1^4+6\pi(\pi^2+3\pi+36)\tau_1^2-27\pi^2\big).\eeas
	 	Let us choose $t=\tau_1^2$ and $P_7(t)=(2\pi^3+15\pi^2+72\pi+216)t^2+6\pi(\pi^2+3\pi+36)t-27\pi^2$.
	 Thus it follows by direct calculation for $0<t\le 0.22421$(approx), we have $P_7(t)\le 0$. Then this implies that for $0<\tau_1\le 0.47351$ we get $\mid A_4B_4\mid\le\mid C_4\mid \left(\mid B_4\mid -4\mid A_4\mid\right)$.\\ 
	 	Therefore, using Lemma \ref{l2.5} and from (\ref{h4.10}) we get
	 	\bea\label{h4.12} H_{2,1}(F_{f^{-1}}/2)&\le&\frac{1}{3\pi^2}\tau_1\left(1-\tau_1^2\right)\Psi(A_4, B_4, C_4)\nonumber\\&\le&\frac{1}{36\pi^4}\left(-(5\pi^2+36\pi+72)\tau_1^4+6\pi(6-\pi)\tau_1^2+9\pi^2\right)=\frac{1}{36\pi^4}P_8(\tau_1),  \eea
	 	where $P_8(\tau_1)=-(5\pi^2+36\pi+72)\tau_1^4+6\pi(6-\pi)\tau_1^2+9\pi^2$. The graph of $P_8$ is shown in Figure~\ref{fig:P7}.

	 	\begin{figure}[t]
	 		\centering
	 		
	 		\begin{tikzpicture}
	 			
	 			\begin{axis}[
	 				width=13cm,
	 				height=8cm,
	 				xmin=0,
	 				xmax=0.49551,
	 				ymin=87,
	 				ymax=93.5,
	 				domain=0:0.47351,
	 				samples=400,
	 				axis lines=left,
	 				xlabel={$\tau_1$},
	 				ylabel={$P_8(\tau_1)$},
	 				xlabel style={font=\large},
	 				ylabel style={font=\large},
	 				tick label style={font=\small},
	 				grid=major,
	 			major grid style={
	 				gray!25,
	 				line width=0.3pt
	 			},
	 			minor grid style={
	 				gray!25,
	 				line width=0.3pt
	 			},
	 			grid=both,
	 			minor tick num=1,
	 				]
	 				
	 				\addplot[
	 				blue,
	 				very thick
	 				]
	 				{-(5*pi^2+36*pi+72)*x^4
	 					+6*pi*(6-pi)*x^2
	 					+9*pi^2};
	 				
	 				\addplot[
	 				only marks,
	 				mark=*,
	 				mark size=3pt,
	 				red
	 				]
	 				coordinates
	 				{
	 					(0.33698,91.92207)
	 				};
	 				
	 				\addplot[
	 				red!95!,
	 				densely dashed,
	 				ultra thick,
	 				dash pattern=on 5pt off 2pt
	 				]
	 				coordinates
	 				{
	 					(0.33698,\pgfkeysvalueof{/pgfplots/ymin})
	 					(0.33698,91.92207)
	 				};

	 				\addplot[
	 				red!95!,
	 				densely dashed,
	 				ultra thick,
	 				dash pattern=on 5pt off 2pt
	 				]
	 				coordinates
	 				{
	 					(0,91.92207)
	 					(0.33698,91.92207)
	 				};

	 				\node[
	 				anchor=south east,
	 				font=\small
	 				]
	 				at (axis cs:0.39698,92.12207)
	 				{$(\tau_0,P_8(\tau_0))$};
	 				
	 			\end{axis}
	 			
	 		\end{tikzpicture}
	 		
	 		\caption{Graph of the auxiliary polynomial
	 			$P_8(\tau_1)=-(5\pi^2+36\pi+72)\tau_1^4
	 			+6\pi(6-\pi)\tau_1^2+9\pi^2$
	 			on the interval $0<\tau_1\le0.47351$. The graph shows the unique interior maximum attained near $\tau_0\approx0.33698$.}
	 		\label{fig:P7}
	 	\end{figure}

	 	It is clear that for $0<\tau_1\le 0.47351$ maximum value of $P_8(\tau_1)$ exists at $\tau_0\approx 0.33698$. Hence from (\ref{h4.12}) we get
	 	\beas H_{2,1}(F_{f^{-1}}/2)\le \frac{1}{36\pi^4}P(0.33698)\approx 0.025981.\eeas\\
	    {\bf\underline{Subcase--3.5.}} Let us consider $0.47351<\tau_1<1$. It remains to consider the last case in Lemma \ref{l2.5}. Then by (\ref{h4.10}) and (\ref{h4.11}) we have
	 	\bea\label{h4.13} 
	 	\mid H_{2, 1}(F_{f^{-1}}/2)\mid&\le&\frac{1}{3\pi^2}\tau_1(1-\tau_1^2)(|C_4|+|A_4| )\sqrt{1-\frac{B_4^2}{4A_4C_4}}\nonumber\\ &\le& \left(\frac{(72-\pi^2)\tau_1^4-6\pi^2\tau_1^2+9\pi^2}{36\pi^4\sqrt{\pi^2+36}}\right)\sqrt{\frac{3\pi^2+162+(\pi^2-18)\tau_1^2}{3+\tau_1^2}}\nonumber\\ &\le&\frac{1}{36\pi^4(\sqrt{\pi^2+36})}\eta(\tau_1),\eea
	 	where $\eta(\tau_1)={(72-\pi^2)\tau_1^4-6\pi^2\tau_1^2+9\pi^2}\sqrt{\frac{3\pi^2+162+(\pi^2-18)\tau_1^2}{3+\tau_1^2}}$, for $0.47351<\tau_1<1$. Now, differentiate this with respect to $\tau_1$, we get
	 	\beas\eta^{\prime}(\tau_1)=\frac{4\tau_1P_8(\tau_1)}{{\left(3+\tau_1^{2}\right)^2} \sqrt{\frac{3\pi^2+162+(\pi^2-18)\tau_1^2}{3+\tau_1^2}}},\eeas
	 	where $$P_8(\tau_1)=-(\pi^4-90\pi^2+1296)\tau_1^6+(-9\pi^4+432\pi^2+3888)\tau_1^4+(-27\pi^4+162\pi^2+34992)\tau_1^2-27\pi^4-1944\pi^2$$. Solving $P_8(\tau_1)=0$ numerically for $\tau_1\in (0, 1)$, we find a unique solution $\tau_0^{\prime}\approx0.75797$.
	 	
	 	\smallskip
	 	Since $\tau_0^{\prime}\in (0.47351, 1)$, then it is clear that $P_8(\tau_1)$ is decreasing in $(0.47351, \tau_0^{\prime}]$ and increasing in $[\tau_0^{\prime}, 1)$.
	 	
	 	\smallskip 
	 	Therefore, for $0.47351<\tau_1\le \tau_0^{\prime}$ and from (\ref{h4.13}) we get 
	 	\beas	\mid H_{2, 1}(F_{f^{-1}}/2)\mid\le \frac{\eta{(0.75797)}}{36\pi^2\sqrt{\pi^2+36}}\approx 0.02293. \eeas
	 	
	 	\smallskip
	 	Similarly for $\tau_0^{\prime}\le\tau_1<1$ and from (\ref{h4.13}) we get 
	 	\beas	\mid H_{2, 1}(F_{f^{-1}}/2)\mid\le \left(\frac{1}{18\pi^2}+\frac{2}{\pi^4}\right)\approx 0.026161. \eeas
	 	Comparing the estimates obtained in Cases~1--3, we conclude that $$|H_{2,1}(F_{f^{-1}}/2)|	\le
	 	\frac{1}{18\pi^2}+\frac{2}{\pi^4},$$
	 	which is the desired estimate.\\
	 	Equality is attained by the extremal function
	 	$f_1$ defined in \eqref{f1}. 
	 	
	 	\end{proof}
	 	\begin{theo}
	 		Let $f(z)=z+a_2z^2+a_3z^3+\cdots\in \mathcal{S}_{\arcsin}^{*}$ and let $\Gamma_1, \ \Gamma_2$ be given by \eqref{e1.3}. Then
	 		\beas
	 		-\sqrt{\frac{1}{\pi(4+\pi)}} \ \le \ |\Gamma_2|-|\Gamma_1| \ \le \ \frac{1}{2\pi}.\eeas
	 		Both inequalities are sharp.
	 	\end{theo}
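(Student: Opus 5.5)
The plan is to reduce the statement to Lemma~\ref{l2.6}, exactly as in the proof of the analogous result for $|\gamma_2|-|\gamma_1|$. Note that $\Gamma_1,\Gamma_2$ are given by \eqref{eil.4}. Inserting \eqref{e3.3}--\eqref{e3.4} into \eqref{eil.4} gives
\[
\Gamma_1=-\frac{c_1}{2\pi},\qquad \Gamma_2=-\frac{1}{4\pi}\left(c_2-\frac{4+\pi}{2\pi}c_1^2\right).
\]
Hence
\[
|\Gamma_2|-|\Gamma_1|=\frac{1}{2\pi}\left(\left|Kc_1^2+Lc_2\right|-J|c_1|\right)=\frac{1}{2\pi}\Phi(c_1,c_2),
\]
with $J=1$, $L=\frac12$ and $K=-\frac{4+\pi}{4\pi}$.

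\textbf{Upper bound.} We have $2K+L=-\frac{2}{\pi}$, so $|2K+L|=\frac{2}{\pi}<|L|+J=\frac32$. The ``otherwise'' branch of Lemma~\ref{l2.6} then gives $\Phi\le 2|L|=1$, that is, $|\Gamma_2|-|\Gamma_1|\le\frac{1}{2\pi}$. Equality holds for $f_2$ in \eqref{f2}: there $c_1=0$ and $c_2=2$, so $\Gamma_1=0$ and $|\Gamma_2|=\frac{1}{2\pi}$.

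\textbf{Lower bound.} Here $M=|4K+2L|=\frac{4}{\pi}$. Since $J^2=1\le 2|L|(M+2|L|)=1+\frac4\pi$, the middle branch of Lemma~\ref{l2.6} applies and yields
\[
-\Phi\le 2J\sqrt{\frac{2|L|}{M+2|L|}}=2\sqrt{\frac{\pi}{4+\pi}}.
\]
Dividing by $2\pi$ gives $|\Gamma_2|-|\Gamma_1|\ge-\sqrt{\frac{1}{\pi(4+\pi)}}$.

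\textbf{Sharpness of the lower bound.} This is the only step that needs care, because the extremal function is not one of $f_1,f_2,f_3$. We want the modulus term to vanish, i.e. $c_2=\frac{4+\pi}{2\pi}c_1^2$, while $c_1$ is as large as possible. Using \eqref{e2.2}--\eqref{e2.3} with $\tau_1\in[0,1]$, the condition reads $2\tau_1^2+2(1-\tau_1^2)\tau_2=\frac{2(4+\pi)}{\pi}\tau_1^2$, i.e. $(1-\tau_1^2)\tau_2=\frac{4}{\pi}\tau_1^2$. The constraint $|\tau_2|\le1$ gives $\tau_1^2\le\frac{\pi}{4+\pi}$, with equality exactly when $\tau_2=1$.

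So we take $\tau_1=\sqrt{\pi/(4+\pi)}$ and $\tau_2=1\in\mathbb{T}$. Lemma~\ref{l2.4} then provides the unique $p\in\mathcal P$ with these data:
\[
p(z)=\frac{1+2\tau_1z+z^2}{1-z^2}.
\]
This $p$ has $c_1=2\sqrt{\pi/(4+\pi)}$ and $c_2=2$. Let $f$ be defined by $zf'/f=1+\frac{2}{\pi}\arcsin\big((p-1)/(p+1)\big)$. For this $f$ we get $\Gamma_2=0$ and $|\Gamma_1|=\frac{c_1}{2\pi}=\sqrt{\frac{1}{\pi(4+\pi)}}$, so the lower bound is attained.

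The only real check is that this $p$ indeed lies in $\mathcal P$. It does: $p$ is a convex combination of the two extremal functions $\frac{1+z}{1-z}$ and $\frac{1-z}{1+z}$ with weights $\frac{1+\tau_1}{2}$ and $\frac{1-\tau_1}{2}$.
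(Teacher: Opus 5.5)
Your proposal is correct and follows the paper's proof essentially verbatim. It uses the same reduction to Lemma~\ref{l2.6} with $K=-\frac{4+\pi}{4\pi}$, $L=\frac12$, $J=1$, the same branches for the upper and lower bounds, and the same extremal functions $f_2$ and $p(z)=\frac{1+2\sqrt{\pi/(4+\pi)}\,z+z^2}{1-z^2}$. Your derivation of the lower-bound extremal from $\tau_1=\sqrt{\pi/(4+\pi)}$, $\tau_2=1$, and your convex-combination check that $p\in\mathcal P$, are useful details the paper leaves implicit.
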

	 	\begin{proof}
	 			In view of (\ref{e1.3}) and (\ref{e3.3})--(\ref{e3.4}) we see that
	 			\bea\label{i3.14} \mid \Gamma_2\mid -\mid\Gamma_1\mid&=&\bigg\mid \frac{a_3}{2}-\frac{3a_2^2}{4}\bigg\mid-\bigg\mid\frac{a_2}{2}\bigg\mid\nonumber\\&=&\frac{1}{2\pi}\left(\bigg\mid-\left(\frac{4+\pi}{4\pi}\right)c_1^2+\frac{c_2}{2}\bigg\mid-\bigg\mid c_1\bigg\mid\right)\nonumber\\&=&
	 			\frac{1}{2\pi}\left(\left|Kc_1^{2}+Lc_2\right|-J|c_1|\right)=\frac{1}{2\pi}\Phi\left(c_1, c_2\right),\eea 
	 			where $K=-\frac{4+\pi}{4\pi}$, $L=\frac{1}{2}$ and $J=1$.
	 			
	 			\smallskip
	 			 Now, for upper bound it is clear that $\mid 2K+L\mid \ \not\ge \ \mid L\mid+J$. Then by Lemma \ref{l2.6}  in (\ref{i3.14}) we obtain
	 			\bea\label{i3.15}\frac{1}{2\pi}\Phi(c_1, c_2)=\mid \Gamma_2\mid -\mid\Gamma_1\mid \le \frac{1}{2\pi}.\eea
	 			For sharpness of the upper bound we consider $f_2$ introduced by (\ref{f2}).
	 			
	 			\smallskip
	 			For lower bound we see that $M=\mid 4K+2L\mid=\frac{4}{\pi}$. Also it is clear that $J^2\le 2\mid L\mid\left(M+2\mid L\mid\right)$. Thus by Lemma \ref{l2.6} we have 
	 			\bea\label{i3.16} \mid\Gamma_2\mid-\mid\Gamma_1\mid=\frac{1}{2\pi}\Phi(c_1, c_2)\ge -\sqrt{\frac{1}{\pi(4+\pi)}}.\eea
	 			Therefore, from (\ref{i3.15}) and (\ref{i3.16}) we obtain the desired bound.\\
	 			For sharpness of the lower bound we consider the function 
	 			$ p(z)=\frac{1+2\sqrt{\frac{\pi}{\pi+4}}z+z^2}{1-z^2}$, then we have $\omega(z)=\frac{z\left(z+\sqrt{\frac{\pi}{\pi+4}}\right)}{\left(1+\sqrt{\frac{\pi}{\pi+4}}\right)}$. Using (\ref{e3.1}) we get 
	 			\beas f_5(z)=z\exp\left({\frac{2}{\pi}\int_{0}^{z}\frac{\arcsin \omega(t)}{t}dt}\right).\eeas
	 			Hence, proof of the theorem is complete.
	 		\end{proof}
	 		
	 		\vspace{5mm}
	 		
	 		\noindent\textbf{Conflict of interest:} The authors declare that there is no conflict  of interest regarding the publication of this paper.\vspace{1.2mm}
	 		
	 		\noindent {\bf Funding:} Not Applicable.\vspace{1.2mm}
	 		
	 		\noindent\textbf{Data availability statement:}  Data sharing not applicable to this article as no datasets were generated or analysed during the current study.\vspace{1.2mm}
	 		
	 		\noindent {\bf Authors' contributions:} All the authors have equal contributions in preparation of the manuscript.


\begin{thebibliography}{99}
	
	\bibitem{Ali_BMSS_2003} R. Ali, Coefficients of the inverse of strongly starlike functions, {\textit{Bull. Malays. Math. Sci. Soc.}}, {\bf{26}}(2003), 63--71.
	
	\bibitem{Ali_Vasudevarao_PAMS_2018} M. F. Ali and V. Allu, On logarithmic coefficients of some close-to-convex functions, {\textit{Proc. Amer. Math. Soc.}}, {\bf{146}}(3)(2018), 1131--1142.
	
	\bibitem{Ali_Allu_BAMS} M. F. Ali and V. Allu, Logarithmic coefficients of some close-to-convex functions, {\textit{Bull. Aust. Math. Soc.}}, {\bf{95}}(2017), 228-237.
	
	\bibitem{Alotabi et. al.} A. Alotaibi, M. Arif, M. A. Alghamdi and S. Hussain, Starlikeness associated with cosine hyperbolic function, {\textit{Math.}}, {\bf 8}(2020), 1118
	
    \bibitem{ASS} A. Banerjee, S. Majumder and S. Panja, Hankel and Toeplitz determinant estimates for starlike functions via Schwarz function techniques,{\textit{ Ukr. Math. J.}}, (Accepted).
	
	\bibitem{Bano Raza} K. Bano, M. Raza, Starlike functions associated with cosine functions, {\textit{Bull. Iran. Math. Soc.}}, {\bf{47}}(2021), 1513--1532.
	
    \bibitem{cho et al} N. E. Cho, V. Kumar, S. S. Kumar, V. Ravichandran, Radius problems for starlike functions associated with the sine function, {\textit{Bull. Iran. Math. Soc.}}, {\bf{45}}(2019), 213--232.
	
    \bibitem{CKL1} N. E. Cho, B. Kowalczyk and A. Lecko, Sharp bounds of some coefficient functionals over the class of functions convex in the direction of the imaginary axis, {\textit{Bull. Aust. Math. Soc.}}, {\bf{100}}(2019), 86--96.
	
	\bibitem{CKS1} J. H. Choi, Y. C. Kim and T. Sugawa, A general approach to the Fekete-Szeg\"{o} problem, {\textit{J. Math. Soc. Japan}}, {\bf{59}}(2007), 707--727.
	
	\bibitem{PL_D_1983} P. L. Duren, Univalent Functions, Springer-Verlag, New York, 1983.
	
	\bibitem{Janowski_Anpolon_1970} W. Janowski, Extremal problems for a family of functions with positive real part and for some related families, {\textit{Ann. Polon. Math.}}, {\bf{23}}(1970/71), 159--177.
	
    \bibitem{KL_BAMS_2022} B. Kowalczyk and A. Lecko, Second Hankel determinant of logarithmic coefficients of convex and starlike functions, {{Bull. Aus. Math. Soc.}}, {\bf{105}}(3)(2022), 458--467.
	
	\bibitem{KL_RACSAM_2023} B. Kowalczyk and A. Lecko, The second Hankel determinant of the logarithmic coefficients of strongly starlike and strongly convex functions, {\textit{Rev. R. Acad. Cienc. Exactas F´ıs. Nat. Ser. A Mat. RACSAM }}, {\bf{117}}(2)(2023), Paper No. 91.
	
	\bibitem{Kumar_AMP_2021} S. S. Kumar and G. Kamaljeet, A cardioid domain and starlike functions, {\textit{Anal. Math. Phys.}}, {\bf{11}}(2)(2021), 34 pp.
	
	\bibitem{Kumar et. al.} S. S. Kumar, M. G. Khan, B. Ahmad, and W. K. Mashwani, A class of analytic functions associated with sine hyperbolic functions, {\textit{J. Anal.}}, {\bf 32}(5)(2024), 3065--3085.
	
	\bibitem{Kumar_2024} S. Kumar, R. K. Pandey and P. Rai, Sharp bounds on Hankel and Hermitian–Toeplitz determinants of associated Sakaguchi functions, {\textit{Lith. Math. J.}}, {\bf{64}}(4)(2024), 491-506.
	
	\bibitem{Kumar_Yadav_IJS_2026} S. S. Kumar and P. Yadav, On a Class of Starlike Functions Associated with a Bean Shaped domain, {\textit{Iran. J. Sci.}}, {\bf{50}}(2026), 197--211.
	
	\bibitem{Lecko et al} A. Lecko, A. and B. Smiarowska, Sharp inequalities for Zalcman functional of logarithmic coefficients of inverse functions in certain classes of analytic functions, {\textit{J. Math. Inequal.}}, {\bf19}(1)(2025), 81--97. 

	\bibitem{Lowner_annalen_1923} K. L\"{o}wner, Untersuchungen \"{u}ber schlichte konforme Abbildungen des Einheitskreises. I, {\textit{Math. Annalen}}, {\bf{89}}(1)(1923), 103--121.
	
	\bibitem{Ma+Minda_1992} W. C. Ma and D. Minda, A unified treatment of some special classes of univalent functions,{\textit{Proceedings of the Conference on Complex Analysis}} (Tianjin, 1992), pp. 157–169, Conf.
	Proc. Lecture Notes Anal., Vol. I, International Press, Cambridge, MA, 1994.
	
	\bibitem{Majumder et al._JA_2026} S. Majumder, D. Pramanik and N. Sarkar, The second Hankel determinant for logarithmic coefficients of inverse convex functions of a given order, {J. Anal.}, {\bf{34}}(2026), 365--388.
	
	\bibitem{Majumder_Indian} S. Majumder, N. Sarkar and M. B. Ahamed, On coefficient problems for classes $\mathcal{S}^{*}_e$ and $\mathcal{C}_e$, {\textit{Indian J. Pure Appl. Math.,}} (2026). 
	
	\bibitem{Mwndiratta et al_BMMS_2015} R. Mendiratta, S. Nagpal, and V. Ravichandran, On a subclass of strongly starlike functions associated with exponentia function, {\textit{Bull. Malays. Math. Sci. Soc.}}, {\bf{38}}(1)(2015), 365--386.
	
	\bibitem{Milin_1971} I. M. Milin, Univalent Functions and Orthonormal Systems, {\textit{Transl. Math. Monogr.}}, Vol. {\bf{49}}, AMS, 1977.
	
	\bibitem{MK_BIMS_2023}  M. Mundalia and S. S. Kumar, Coefficient problems for certain close-to-convex functions, {\textit{Bull. Iran. Math. Soc.}} {\bf{49}}(1)(2023), Paper No. 5.
	
	\bibitem{Nagpal_2025} S. Nagpal, Hankel determinant for two subclasses of univalent functions with bounded turning, {\textit{Lith. Math. J.}}, {\bf{65}}(2)(2025), 294-305.
	
    \bibitem{PSW_RM} S. Ponnusamy, N. L. Sharma and K. J. Wirths, Logarithmic Coefficients of the Inverse of Univalent Functions, {\textit{Result. Math.}}, {\bf{73}}(2018), 160.
	
	\bibitem{Ponnusamy_BSM_2021} S. Ponnusamy and T. Sugawa, Sharp inequalities for logarithmic coefficients and their applications, {\textit{Bull. Sci. Math.}} {\bf{166}}(2021), 102931.
	
	\bibitem{Ponnusamy_JAMS_2020} S. Ponnusamy, N. L. Sharma, and K. J. Wirths, Logarithmic coefficients problems in families related to starlike and convex functions, {\textit{J. Aust. Math. Soc.}}, {\bf{109}}(2020), 230--249.
	
	\bibitem{Pom_1967} C. Pommerenke, On the Hankel determinants of univalent functions, {\textit{Math.}}, {\bf 14}(1)(1976), 108--112.
	
	\bibitem{Raina_HJMS_2015} R. k. Raina and J. Sok\'{o}l, On coefficient estimates for a certain class of starlike functions, {\textit{Hacet. J. Math. Stat.}}, {\bf{44}}(6)(2015), 1427--1433.
	
	\bibitem{Ravichandran_Verma_CRMAA_2015} V. Ravichandran and S. Verma, Bound for the fifth coefficient of certain starlike functions, {\textit{C. R. Math. Acad. Sci. Paris}}, {\bf{353}}(2015), 505--510.
	
	\bibitem{Raza et. al} M. Raza, A. Riaz, D. K. Thomas and P. Zaprawa, Third Hankel determinant for starlike and convex functions associated with the exponential function, {\textit{Bol. Soc. Mat. Mex.}}, {\bf{31}}(1)(2025), 16.
	
	\bibitem{Raza_Riaz} M. Raza, A. Riaz, and P. Zaprawa, Hankel determinants for starlike functions associated with a cardioid domain, {\textit{Lith. Math. J.}}, {\bf{65}}(3)(2025), 368-379.
	
	\bibitem{Ronning_AUMC_1991} F. Rønning, On starlike functions associated with parabolic regions, {\textit{Ann. Univ. Mariae Curie-Sk lodowska Sect.}},  {\bf{45}}(1991), 117--122.
	
	\bibitem{BBS_2024} B. Şeker, B. Çekiç, S. Sümer and O. Akçiçek, The second Hankel determinant of logarithmic coefficients and logarithmic inverse coefficients for the class of bounded turning functions of order $\alpha$, {\textit{Lith. Math. J.}}, {\bf 64}(4)(2024), 530-545.
	
	\bibitem{Sim_Thomas_Symmetry_2020} Y. J. Sim and D. K. Thomas, On the difference of inverse coefficients of univalent functions, {\textit{Symmetry}}, {\bf{12}}(12)(2020).
	
	\bibitem{Stainkiewicz} J. Sok\'{o}l and J. Stankiewicz, Radius of convexity of some subclasses of strongly starlike functions, {\textit{Zesz. Nauk. Politech. Rzesz. Mat.}}, {\bf{19}}(1996), 101--105.
	
	\bibitem{S_Arif_Lithunian} L. Shi and M. Arif, Sharp bounds on the third Hankel determinant for the Ozaki close-to-convex and convex functions, {\textit{Lith. Math. J.}}, {\bf{63}}(4)(2023), 487-504.
	
	\bibitem{Thomas_PAMS_2016} D. Thomas, On the logarithmic coefficients of close-to-convex functions, {\textit{Proc. Amer. Math. Soc.}}, {\bf{144}}(4)(2016), 1681--1687.
	
	\bibitem{Zaprawa_BSMM_2021} P. Zaprawa, Initial logarithmic coefficients for functions starlike with respect to symmetric points, {\textit{Bol. Soc. Mat. Mex.}}, {\bf{27}}(3)(2021), Paper No. 62, 13 pp.
	
	\bibitem{Zaprawa} P. Zaprawa, Hankel determinant $H_{2, 3}$ for starlike and convex functions, {\textit{Bull. Sci. Math.}}, {\bf 194}(2024), Art., ID 103459.
	
	
	
	
	
	
	
	
	
	
	
	

	

	

	

	
	
	
	
	
	
	
	
	
	
	
\end{thebibliography}
\end{document}